\newtheorem{theorem}{Theorem}[section]
\newtheorem{lemma}[theorem]{Lemma}
\newtheorem{proposition}[theorem]{Proposition}
\theoremstyle{definition}
\newtheorem{definition}[theorem]{Definition}
\numberwithin{equation}{section}
\author{J. Janssen}
\address{Department of Mathematics and Statistics, Dalhousie University, Canada}
\email{Jeannette.Janssen@dal.ca}
\author{A. Quas}
\address{Department of Mathematics and Statistics, University of Victoria, Canada}
\email{aquas(a)uvic.ca}
\author{R. Yassawi}
\address{Department of Mathematics, Trent University,  Canada}
\email{ryassawi@trentu.ca}
\thanks{ The first two authors are partially supported by NSERC Discovery Grants. }
\keywords{Bratteli diagrams, Vershik maps}
\subjclass{Primary 37B10, Secondary 37A20}
\newtheorem{example}[theorem]{Example}
\DeclareMathOperator{\Var}{Var}
\newcommand{\E}{\mathbb E}
\newcommand{\PP}{\mathbb P}
\newcommand{\N}{\mathbb N}
\newcommand{\om}{\omega}
\newcommand{\wt}{\widetilde}
\begin{document}
\title{Bratteli diagrams where random orders are 
imperfect}

\begin{abstract}
For the simple Bratteli diagrams $B$ where there is a single edge connecting 
any two vertices in consecutive levels, we show that a random order has 
uncountably many infinite paths if and only if the growth rate of the 
level-$n$ vertex sets is super-linear.  This gives us the dichotomy: a 
random order on a slowly growing Bratteli diagram admits a homeomorphism, 
while a random order on a quickly growing Bratteli diagram does not.
We also show that for a large family of infinite rank Bratteli diagrams $B$,
a random order on $B$ does not admit a continuous Vershik map.
\end{abstract}

\maketitle

\section{Introduction}\label{Introduction}
Consider the following random process. For each natural number  $n$, we have a 
collection of finitely many individuals.
Each individual in the $n+1$-st collection randomly picks a parent from the 
$n$-th collection, and this is done for all $n$.  If we know how many 
individuals there are in each generation, the question ``How many infinite 
ancestral lines are there?" almost always has a common answer $j$: 
what is it? We can also make this game more general, by for each 
individual, changing the odds that he choose a certain parent, 
and ask the same question.

The information that we are given will come as a {\em Bratteli diagram} 
$B$ (Definition \ref{Definition_Bratteli_Diagram}), where each 
``individual" in generation $n$ is represented by a vertex in the $n$-th 
vertex set $V_n$, and the chances that an individual $v\in V_{n+1}$ 
chooses $v' \in V_n$ as a parent is the ratio of the number of edges 
incoming to $v$ with source $v'$ to the total number of edges incoming 
to $v$. We  consider the space $\mathcal O_B$ of {\em orders} on $B$ 
(Definition \ref{order_definition}) as a measure space equipped with 
the completion of the uniform product measure $\mathbb P$.
A result in \cite{bky} (stated as Theorem \ref{generic_theorem} here) 
tells us that there is some $j$, either a positive integer or infinite, 
such that a $\mathbb P$-random order $\om$ possesses $j$ maximal paths.

Bratteli diagrams, which were first studied in operator algebras, appeared 
implicitly  in the measurable dynamical setting in \cite{vershik_1,vershik}, 
where it was shown that any 
ergodic invertible transformation of a Lebesgue space can be represented  
as a  measurable ``successor" (or {\em Vershik}) map on the space of 
infinite paths $X_B$ in some Bratteli diagram $B$ (Definition 
\ref{Good_and_bad}). The successor map, which  is defined using 
an order on $B$, is not defined on the set of maximal paths in 
$X_B$, but as this set  is typically a null set, it poses no 
problem in the measurable framework.  Similar results were 
discovered in the topological setting in \cite{hps}: any minimal 
homeomorphism  on a Cantor Space has a representation as a  (continuous, 
invertible) Vershik map which is  defined on all of $X_B$ for some 
Bratteli diagram $B$. To achieve this, the technique used in 
\cite{hps} was to construct the order so that it had a unique 
minimal and maximal path, in which case the successor map extends 
uniquely to a homeomorphism of $X_B$.  For such an order our 
quantity $j$ takes the value 1.  We were curious to see whether such an order is 
typical, and whether a typical order defined a continuous Vershik 
map. 
Note that the value $j$ 
is not an invariant of topological dynamical properties that 
are determined by the Bratteli diagram's {\em dimension group} 
\cite{effros}, such as {\em strong orbit equivalence} 
\cite{g_p_s}.

In this article we compute  $j$ for a large family of {\em infinite rank}
Bratteli diagrams (Definition \ref{rank_d_definition}). Namely, in Theorem 
\ref{thm:dichot}, we show that $j$ is uncountable for the situation where 
any individual at stage $n$ is equally likely to be chosen as a parent by 
any individual at stage $n+1$, whenever  the generation growth rate is super-linear. 
If the generations grow at a slower rate than this, $j=1$. 
We note that this latter situation has been studied in the context of 
gene survival in a variable size population, as in the Fisher-Wright model 
(e.g. \cite{seneta}, \cite{donnelly}). We describe this connection in 
Section \ref{results}.

In Theorem \ref{general_case} we generalise part of Theorem  \ref{thm:dichot}
to a large family of  Bratteli diagrams.
We can draw the following conclusion from these results.  
An order $\om$ is called {\em perfect} if it admits a continuous 
Vershik map.  
Researchers working with continuous Bratteli-Vershik 
representations of dynamical systems usually work with a subfamily of perfect orders called
{\em proper} orders: those that have only one maximal and one minimal path.
For a large class of simple Bratteli 
diagrams (including the ones we identify in Theorems \ref{special_case} and 
\ref{general_case}), if $j>1$, then  a $\mathbb P$-random order is almost 
surely not perfect (Theorem \ref{random_order_imperfect}), hence not proper. 
We note that this is in 
contrast to the case for finite rank diagrams, where almost any order put on   
any reasonable finite rank Bratteli diagram is 
perfect (Section 5, \cite{bky}). 

\section{Bratteli diagrams and Vershik maps}\label{Preliminaries}

In this section, we collect the notation and basic definitions that
are used throughout the paper.

\subsection{Bratteli diagrams}
\begin{definition}\label{Definition_Bratteli_Diagram}
A {\it Bratteli diagram} is an infinite graph $B=(V,E)$ such that the vertex
set $V=\bigcup_{i\geq 0}V_i$ and the edge set $E=\bigcup_{i\geq 1}E_i$
are partitioned into disjoint subsets $V_i$ and $E_i$ where

\begin{enumerate}[(i)]
\item $V_0=\{v_0\}$ is a single point;
\item $V_i$ and $E_i$ are finite sets;
\item there exists a range map $r$ and a source map $s$, both from $E$ to
$V$, such that $r(E_i)= V_i$, $s(E_i)= V_{i-1}$.
\end{enumerate}

\end{definition}

Note that $E$ may contain multiple edges between a pair of vertices.
The pair $(V_i,E_i)$ or just $V_i$ is called the {\em $i$-th level} of the
diagram $B$. A finite or infinite sequence of edges $(e_i : e_i\in E_i)$
such that $r(e_{i})=s(e_{i+1})$ is called a {\it finite} or {\it infinite
path}, respectively. 

For $m<n$, $v\, \in V_{m}$ and $w\,\in V_{n}$, let $E(v,w)$ denote the set of 
all paths $\overline{e} = (e_{1},\ldots, e_{p})$
with $s(e_{1})=v$ and $r(e_{p})=w$. 
For a Bratteli diagram $B$,
let $X_B$ be the set of infinite paths starting at the top vertex $v_0$.
 We
endow $X_B$ with the topology generated by cylinder sets
$\{U(e_j,\ldots,e_n): j, \,\, n \in \mathbb N, \mbox{ and }
 (e_j,\ldots,e_n) \in E(v, w),  v \in V_{j-1}, w \in V_n
\}$, where
$U(e_j,\ldots,e_n):=\{x\in X_B : x_i=e_i,\;i=j,\ldots,n\}$.
With this topology, $X_B$ is a 0-dimensional compact metrizable space.

\begin{definition}\label{incidence_matrices_definition}
Given a Bratteli diagram $B$, the $n$-th {\em incidence matrix}
$F_{n}=(f^{(n)}_{v,w}),\ n\geq 0,$ is a $|V_{n+1}|\times |V_n|$
matrix whose entries $f^{(n)}_{v,w}$ are equal to the number of
edges between the vertices $v\in V_{n+1}$ and $w\in V_{n}$, i.e.
$$
 f^{(n)}_{v,w} = |\{e\in E_{n+1} : r(e) = v, s(e) = w\}|.
$$
\end{definition}

\begin{definition}\label{rank_d_definition} Let $B$ be a Bratteli diagram.
\begin{enumerate}
\item
We say $B$ has \textit{finite rank} if  for some $k$, $|V_n| \leq k$ for all $n\geq 1$.
\item
We say that $B$ is {\em simple} if  for any level
$m$ there is $n>m$ such that $E(v,w) \neq \emptyset$ for all $v\in
V_m$ and $w\in V_n$.
\item
We say that 
a Bratteli diagram is {\em completely connected} if all entries of its incidence 
matrices are positive.

\end{enumerate}
\end{definition}

In this article we work only with completely connected Bratteli diagrams.

\subsection{Orderings on a Bratteli diagram}

\begin{definition}\label{order_definition} A Bratteli diagram $B=(V,E) $ 
is called {\it ordered}
if a linear order ``$>$" is defined on every set  $r^{-1}(v)$, $v\in
\bigcup_{n\ge 1} V_n$. We use  $\om$ to denote the corresponding partial
order on $E$ and write $(B,\om)$ when we consider $B$ with the ordering 
$\om$. Denote by $\mathcal O_{B}$ the set of all orderings on $B$.
\end{definition}

Every $\omega \in \mathcal O_{B}$ defines a \textit{lexicographic}
partial ordering on the set of finite paths between
vertices of levels $V_k$ and $V_l$:  $(e_{k+1},\ldots,e_l) > (f_{k+1},\ldots,f_l)$
if and only if there is an $i$ with $k+1\le i\le l$, $e_j=f_j$ for $i<j\le l$
and $e_i> f_i$.
It follows that, given $\om \in \mathcal O_{B}$, any two paths from $E(v_0, v)$
are comparable with respect to the lexicographic ordering generated by $\om$.  
If two infinite paths  are {\em tail equivalent}, i.e. agree from some vertex 
$v$ onwards, then we can compare them by comparing their initial segments in
$E(v_0,v)$. Thus $\om$ defines a partial order on $X_B$, where two infinite 
paths are comparable if and only if they are tail equivalent.

\begin{definition}
We call a finite or infinite path $e=(e_i)$ \textit{ maximal (minimal)} if every
$e_i$ is maximal (minimal) amongst the edges
from $r^{-1}(r(e_i))$.
\end{definition}

Notice that, for $v\in V_i,\ i\ge 1$, the
minimal and maximal (finite) paths in $E(v_0,v)$ are unique. Denote
by $X_{\max}(\om)$ and $X_{\min}(\om)$ the sets of all maximal and
minimal infinite paths in $X_B$, respectively. It is not hard to show that
$X_{\max}(\om)$ and $X_{\min}(\om)$ are non-empty closed subsets of
$X_B$. If $B$ is completely connected, then  $X_{\max}(\om)$ and $X_{\min}(\om)$  have no
interior points.

Given a Bratteli diagram $B$, we can  describe the set
of all orderings $\mathcal O_{B}$ in the following way.
Given a  vertex $v\in V\backslash V_0$, let   $P_v$ denote the set of all 
orders on $r^{-1}(v)$; an element in $P_v$
is denoted by $\om_v$.
Then $\mathcal O_{B}$ can be
represented as
\begin{equation}\label{orderings_set}
\mathcal O_{B} = \prod_{v\in V\backslash V_0}P_v .
\end{equation}
We write an element of $\mathcal O_B$ as $(\omega_v)_{v\in V\setminus V_0}$.

Recall that an $N$th level \emph{cylinder set} is a set of the form
$\bigcap_{v\in \bigcup_{i=1}^{N} V_i}[w_{v}^*]$,
where  $[w_v^*]=\{ \om: \om_v=\om_v^{*} \}$. The collection of $N$th
level cylinder sets forms a finite $\sigma$-algebra, $\mathcal F_N$. 
We let $\mathcal B$ denote the $\sigma$-algebra generated by  $\bigcup_N\mathcal F_N$
and equip $(\mathcal O_B,\mathcal B)$ with the product measure, $\mathbb P'=
\prod_{v\in V\backslash V_0} \mathbb P_v$ where  $\mathbb P_v$
is the uniform measure on $P_v$: $\mathbb P_v(\{i\}) =  
(|r^{-1}(v)|!)^{-1}$ for every $i \in P_v$ and $v\in V\backslash V_0$.
Finally, it will be convenient to extend the measure space $(\mathcal O_B,
\mathcal B,\mathbb P')$ to its completion, $(\mathcal O_B,\mathcal F,\mathbb P)$.
(The reason for the use of the completion is that the subset of $\mathcal O_B$
consisting of orders with uncountably many maximal paths may not be 
$\mathcal B$-measurable, but will shown to be $\mathcal F$-measurable.)

\begin{definition} \label{telescoping_definition}
Let $B$ be a Bratteli diagram, and $n_0 = 0 <n_1<n_2 < \ldots$ be a strictly 
increasing sequence of integers. The {\em telescoping of $B$ to $(n_k)$} 
is the Bratteli diagram $B'$, whose $k$-level vertex set $V_k'= V_{n_k}$ 
and whose incidence matrices $(F_k')$ are defined by
\[F_k'= F_{n_{k+1}-1} \circ \ldots \circ F_{n_k},\]
where $(F_n)$ are the incidence matrices for $B$.
\end{definition}

If $B'$ is a telescoping of $B$, then there is a natural injection 
$L: \mathcal O_B \rightarrow \mathcal O_{B'}$.
Note that unless $|V_n|=1$ for all but finitely many $n$,  
$L(\mathcal O_{B})$ is a set of zero measure in  
$\mathcal O_{B'}$.

\subsection{Vershik maps}

\begin{definition}\label{measvmap}
Let $(B, \omega)$ be an  ordered Bratteli
diagram. The \emph{successor map}, $s_\omega$ is the map
from $X_B\setminus X_\text{max}(\omega)$ to 
$X_B\setminus X_\text{min}(\omega)$ defined by
$s_\omega(x_1,x_2,\ldots)=(x_1^0,\ldots,x_{k-1}^0,\overline
{x_k},x_{k+1},x_{k+2},\ldots)$, where $k=\min\{n\geq 1 : x_n\mbox{
is not maximal}\}$, $\overline{x_k}$ is the successor of $x_k$ in
$r^{-1}(r(x_k))$, and $(x_1^0,\ldots,x_{k-1}^0)$ is the minimal path
in $E(v_0,s(\overline{x_k}))$.
\end{definition}

\begin{definition}\label{VershikMap}
Let $(B, \omega)$ be an  ordered Bratteli
diagram. We say that $\varphi = \varphi_\omega : X_B\rightarrow X_B$
is a {\it continuous Vershik map} if it satisfies the following conditions:
\begin{enumerate}[i]
\item $\varphi$ is a homeomorphism of the Cantor set $X_B$;
\item $\varphi(X_{\max}(\omega))=X_{\min}(\omega)$;
\item $\varphi(x)=s_\omega(x)$ for all $x\in X_B\setminus X_\text{max}(\omega)$.
\end{enumerate}
\end{definition}

If there is an $s_\omega$-invariant measure $\mu$ on $X_B$ such that 
$\mu(X_{\max}(\om))=\mu(X_{\min}(\om))=0$, then we may extend $s_\om$ to a measure-preserving 
transformation $\phi_\om$ of $X_B$. In this case, we call $\phi_\om$ 
a \emph{measurable Vershik map} of $(X_B,\mu)$.
Note that in our case, $X_{\max}(\om)$ and $X_{\min}(\om)$ have empty interiors, 
so that there is at most one continuous extension of the successor map
to the whole space.

\begin{definition}\label{Good_and_bad}  Let $B$ be a Bratteli diagram.
We say that an ordering $\om\in \mathcal O_{B}$ is \textit{perfect}
if $\om$ admits a continuous Vershik map $\varphi_{\om}$ on $X_B$.
If $\om$ is not perfect, we call it  \textit{imperfect}.
\end{definition}

Let $\mathcal P_B\subset \mathcal O_B$ denote the set of perfect orders on $B$.

\section{The size of certain sets in $\mathcal O_B$.}\label{genericity_results}

A finite rank version of the following result was shown in \cite{bky}
as a corollary of the Kolmogorov 0--1 law; 
the proof for non-finite rank diagrams is the same.

\begin{theorem}\label{generic_theorem}
Let $B$ be a simple Bratteli diagram. Then there
exists  $j \in \mathbb N \cup \{\infty\}$ such that for 
$\mathbb P$-almost all orderings,
$|X_{\max}(\om)|=j$.
\end{theorem}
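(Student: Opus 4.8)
The plan is to realize the event $\{|X_{\max}(\om)| = j\}$ as a tail event and invoke the Kolmogorov 0--1 law, after first checking that the function $\om \mapsto |X_{\max}(\om)|$ is $\mathcal F$-measurable with values in $\N \cup \{\infty\}$. The key observation is that the cardinality of $X_{\max}(\om)$ does not depend on the ordering at any finite initial collection of levels: if $\om$ and $\om'$ agree on all $r^{-1}(v)$ with $v \in V_i$ for $i > N$, then they have the same set of maximal paths \emph{modulo} the finitely many choices of maximal initial segment in $E(v_0, w)$ for $w \in V_N$. Since $B$ is simple, and in fact (as we assume throughout) completely connected, each $w \in V_N$ has a unique maximal path from $v_0$, so the maximal infinite paths through level $N$ are in bijection with the maximal infinite paths in the telescoped diagram with top level $V_N$; in particular $|X_{\max}(\om)|$ is determined by $(\om_v)_{v \in V_i, \, i > N}$ for every $N$. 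Hence $\{|X_{\max}(\om)| = j\}$ lies in the tail $\sigma$-algebra $\bigcap_N \sigma\big( (\om_v)_{v \in V_i,\, i>N}\big)$.

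First I would set up the measurability: for each $k \in \N$, the set $\{\om : |X_{\max}(\om)| \ge k\}$ can be written as a countable union, over choices of $k$ distinct finite paths $\ol e^{\,1}, \ldots, \ol e^{\,k}$ from $v_0$ to some common level $V_n$, of the cylinder events that each $\ol e^{\,i}$ is the maximal path into its endpoint and that the endpoints are distinct; taking $n \to \infty$ and intersecting appropriately exhibits $\{|X_{\max}(\om)| \ge k\}$ as a Borel set, so $\om \mapsto |X_{\max}(\om)| \in \N \cup \{\infty\}$ is $\mathcal B$-measurable, hence $\mathcal F$-measurable. (For the super-level sets one must be a little careful that distinctness of $k$ infinite maximal paths is witnessed at some finite level, which it is.) Then I would argue the tail property as above: changing $\om_v$ for finitely many $v$ permutes, within each $E(v_0,w)$, which path is maximal, but because completely connected diagrams have a unique maximal finite path into each vertex at every level, the \emph{number} of maximal infinite paths is unchanged. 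Therefore each event $\{|X_{\max}(\om)| = j\}$ is a tail event.

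Next, since $\mathbb P$ is a product measure (completed), the Kolmogorov 0--1 law gives that every tail event has probability $0$ or $1$. The events $\{|X_{\max}(\om)| = j\}$ for $j \in \N$ together with $\{|X_{\max}(\om)| = \infty\}$ partition $\mathcal O_B$, so exactly one of them has probability $1$; call the corresponding value $j$. (Note $X_{\max}(\om)$ is always non-empty and closed, so the value is never $0$.) That value is the desired $j \in \N \cup \{\infty\}$.

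The only genuinely delicate point is the measurability of $\{|X_{\max}(\om)|=\infty\}$ and of the finite super-level sets — i.e., confirming that having at least $k$ maximal infinite paths is detected by a Borel condition on $\om$, since a priori two distinct maximal infinite paths might only separate "at infinity." This is handled by noting that two distinct infinite paths differ at some finite coordinate, so $\{|X_{\max}(\om)| \ge k\}$ is the union over $n$ of the events "there exist $k$ pairwise distinct vertices $w_1,\dots,w_k \in V_n$" — which is automatic once $|V_n| \ge k$ — "whose maximal initial segments extend to maximal infinite paths," and the latter is a decreasing intersection of cylinder events. Everything else is a direct application of the 0--1 law, exactly as in the finite-rank case of \cite{bky}; the finite-rank hypothesis played no role there beyond guaranteeing $|V_n|$ is eventually constant, which is not needed for this statement.
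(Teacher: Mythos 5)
Your proposal is correct and follows essentially the same route as the paper, which simply invokes the finite-rank argument of \cite{bky}: realize $\{|X_{\max}(\om)|=j\}$ as a tail event (using that the unique maximal finite path into each level-$N$ vertex makes $|X_{\max}(\om)|$ depend only on the orderings below level $N$) and apply the Kolmogorov 0--1 law to the product measure, with the measurability of the super-level sets handled by a compactness/K\H{o}nig argument as you indicate. The extra care you take with measurability and with the tail property is exactly the content the paper leaves implicit, so there is nothing to add.
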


By symmetry (since the sets $X_{\text{max}}(\om)$ and $X_{\text{min}}(\om)$
have the same distribution), it follows that $|X_{\text{min}}(\om)|=j$
almost surely as well.

\begin{example} It is not difficult, though contrived,  to find a  simple 
finite rank Bratteli diagram $B$ where almost all orderings are not perfect. 
Let $V_n=V= \{v_{1},v_{2}\}$ for $n\geq 1$,  and define
$m^{(n)}_{v,w}:=\frac{f^{(n)}_{v,w}}{\sum_{w}f^{(n)}_{v,w}}$: i.e.
$m^{(n)}_{v,w}$ is the proportion of edges with range $v\in
V_{n+1}$ that have source $w\in V_{n}$.
Suppose that $\sum_{n=1}^{\infty} m_{v_{i},v_{j}}^{(n)}<\infty$ for $i\neq j$. 
Then for almost all orderings, there is some $K$ such that for $n>K$, the 
sources of the two maximal/minimal edges at level $n$ are distinct, i.e. $j=2$. 
The assertion follows from   \cite[Theorem~5.4]{bky}.
\end{example}

We point out that given an unordered Bratteli diagram, $B$, if $B$ is 
equipped with two proper  orderings $\omega$ and $\omega'$, then the resulting 
topological Vershik dynamical systems $s$ and $s'$ are 
strong orbit equivalent \cite{g_p_s}.
Likewise, if $B$ equipped with a perfect ordering is telescoped, the 
topological Vershik systems are conjugate. 
The number of maximal 
paths that a random order on $B$  possesses is not invariant under telescoping.
Take for example a Bratteli diagram $B$ where odd levels consist of a unique 
vertex and even levels have $n^2$
vertices. let all incidence matrices have all entries equal to 1. 
By Theorem   \ref{thm:dichot}, a random 
order on $B$ has infinitely many maximal paths. On the other hand, $B$ can 
be telescoped to a diagram $B'$ with only one vertex at each level,  
so that $B'$ has a unique maximal path.

A finite rank version of the following result is proved in Theorem 5.4 of 
\cite{bky}. 

\begin{theorem}\label{random_order_imperfect}
Suppose that $B$ is a  completely connected Bratteli diagram of infinite 
rank so that $\mathbb P$-almost all orderings
have $j$ maximal and minimal elements. If $j>1$, then $\mathbb P$-almost all 
orderings are imperfect.
\end{theorem}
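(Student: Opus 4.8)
The plan is to argue by contradiction: suppose that the set $\mathcal P_B$ of perfect orders has positive $\mathbb P$-measure; since perfectness is a tail event (changing finitely many $\om_v$ cannot create or destroy a continuous extension of the successor map, as the relevant behaviour is determined by the asymptotics of $X_{\max}(\om)$ and $X_{\min}(\om)$), the Kolmogorov $0$--$1$ law forces $\mathbb P(\mathcal P_B)=1$. So it suffices to show that, on a positive-measure set of orders, no continuous Vershik map exists. First I would recall the structural obstruction to perfectness: if $\om$ is perfect, then the unique continuous extension $\varphi_\om$ of $s_\om$ must map $X_{\max}(\om)$ bijectively and continuously onto $X_{\min}(\om)$, and moreover $\varphi_\om$ restricted to $X_{\max}(\om)$ is determined by a ``matching'' of maximal paths to minimal paths that is compatible with the successor map on nearby non-maximal paths. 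Concretely, for each maximal path $x$, $\varphi_\om(x)$ must be the limit of $s_\om(y)$ as $y\to x$ through non-maximal paths; continuity of $\varphi_\om$ at $x$ requires that this limit exist and be a single minimal path, and that the assignment $x\mapsto\varphi_\om(x)$ be continuous on the (closed, perfect, since $B$ is completely connected) set $X_{\max}(\om)$.

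The key step is to locate, with positive probability, a maximal path $x$ at which this limit \emph{fails to exist}, i.e.\ where sequences $y_k\to x$ force $s_\om(y_k)$ to accumulate on two distinct minimal paths. Here is where $j>1$ enters. Since $|X_{\max}(\om)|=j>1$, there are at least two distinct maximal paths; pick two of them, $x$ and $x'$, and let $N$ be the first level at which they differ, so $x$ and $x'$ pass through distinct vertices $v\ne v'$ of $V_N$ but share the same vertex $w\in V_{N-1}$ (after telescoping we may assume they diverge at a single level; more carefully, one works at a level where their ranges differ). Now consider non-maximal paths $y$ that agree with $x$ up to a high level $n$ and then take a slightly-below-maximal edge: for such $y$, $s_\om(y)$ agrees with the \emph{minimal} path below $s(\overline{y_k})$ on its first $n-1$ coordinates. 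The point is that by varying where the ``defect'' in $y$ occurs, one can steer $s_\om(y)$ to follow the minimal path through $v$ or the minimal path through $v'$; since $j>1$ these minimal paths are genuinely different (using that $|X_{\min}(\om)|=j>1$ as well), so $\lim_{y\to x} s_\om(y)$ does not exist, contradicting condition (iii) together with continuity in Definition \ref{VershikMap}.

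To make this a positive-probability (hence, by the $0$--$1$ law, almost-sure) statement rather than merely a possibility, I would fix a finite level $N$ and show that with probability bounded below there exist vertices $v\ne v'\in V_N$ and $w\in V_{N-1}$ with edges $e,e'$ from $w$ such that: $e$ is maximal in $r^{-1}(w)$ is \emph{not} required---rather we want configurations where two distinct ``almost-maximal'' continuations from a common vertex descend into distinct maximal vertices, \emph{and} the minimal paths through those vertices remain distinct forever (this last is exactly the event ``$j>1$'', which has probability $1$). Complete connectedness guarantees all the needed edges exist; the event that the \emph{local} order near level $N$ realizes the branching pattern above has fixed positive probability (it depends on only finitely many $\om_v$). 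Combining the fixed positive probability of the local configuration with the almost-sure persistence of distinct maximal/minimal paths (from $j>1$) gives a positive-measure set of imperfect orders, and the $0$--$1$ law upgrades this to full measure.

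The main obstacle I anticipate is the precise book-keeping in the middle step: one must verify that the two families of approximating non-maximal paths $y_k$ really do converge to the \emph{same} maximal path $x$ while their images $s_\om(y_k)$ converge to \emph{distinct} minimal paths, and that this is forced by the order structure rather than being an artefact of a particular choice. This requires carefully tracking, for a path $y$ that is maximal up to level $m$ and drops below maximal at level $m$, exactly which vertex of $V_{N}$ the image $s_\om(y)$ passes through, and showing that as $m\to\infty$ one can realize at least two distinct such vertices cofinally---equivalently, that the maximal path $x$ is a limit of ``defects'' lying above at least two different level-$N$ vertices. Here the infinite-rank, completely-connected hypothesis is essential (in finite rank, the defect vertices are eventually trapped and the limit exists, which is precisely why finite-rank diagrams are typically perfect, cf.\ Section 5 of \cite{bky}), and disentangling this dichotomy cleanly is the crux of the argument.
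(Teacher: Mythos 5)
There is a genuine gap, and it sits exactly where you flag it yourself: the ``steering'' step. You claim that because $j>1$ there are two distinct maximal paths $x,x'$ diverging at some level, and that by varying where the defect in an approximating non-maximal path $y$ occurs one can force $s_\om(y)$ to follow two genuinely different minimal paths, so that $\lim_{y\to x}s_\om(y)$ fails to exist. But the existence of several maximal and several minimal paths does not by itself obstruct continuity: it is entirely possible that every defect occurring along approximations to a fixed maximal path $x$ has its successor fall into the \emph{same} ``$n$-clan'' (the level-$n$ vertex through which the minimal path below the successor vertex passes), in which case the limit exists at $x$ even though $j>1$. Indeed, for finite rank diagrams almost all orders are perfect while $j$ can be any value up to the rank (Section 5 of \cite{bky}), so no purely qualitative argument from ``$j>1$ plus complete connectedness'' can work; the conclusion genuinely needs infinite rank in a quantitative way. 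Your proposal never supplies the mechanism that forces defects above arbitrarily high levels to have successors in at least two distinct $n$-clans; the ``local configuration at level $N$ with fixed positive probability'' does not do this, because failure of continuity is not a finite-level event — it is an asymptotic statement about defects at all levels $N'\to\infty$, and the relevant conditional probabilities must be controlled level by level.

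The paper's proof supplies precisely this missing engine. Continuity is encoded by the sets $C_{n,N}$ (orders for which agreement of non-maximal paths to level $N$ forces agreement of successors to level $n$), and one shows that any $\om\in C_{n,N}$ which has at least two infinite $n$-clans must, at \emph{every} level $N'>N$, admit a single function $f\colon V_N\to V_n$ matching $N$-tribes of sources of non-maximal edges to $n$-clans of sources of their successor edges, simultaneously at every vertex $v\in V_{N'}$. The probability of this event, conditioned on $\mathcal F_{N'-1}$, is then bounded using the Eulerian-trail estimate of Lemma \ref{lem:Bing}: for each $v\in V_{N'}$ it is at most $1/(|V_{N'-1}|-1)$, and by conditional independence over $v\in V_{N'}$ one gets the bound $|V_n|^{|V_N|}/(|V_{N'-1}|-1)^{|V_{N'}|}$, which tends to $0$ precisely because $|V_{N'-1}|\to\infty$ (infinite rank) and each vertex has at least $|V_{N'-1}|$ incoming edges (complete connectedness). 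The hypothesis $j>1$ enters only to make the event ``at least two infinite $n$-clans'' have probability close to $1$ for large $n$. Two further, smaller, problems with your outline: the appeal to the Kolmogorov $0$--$1$ law requires both that $\mathcal P_B$ be measurable for the product $\sigma$-algebra and that perfectness be a tail event, neither of which you establish (the paper sidesteps both by covering $\mathcal P_B$ with the measurable sets $\bigcap_{n}\bigcup_{N}C_{n,N}$ of measure zero and using the completed measure); and $X_{\max}(\om)$ need not be a perfect set, since $j$ may be finite.
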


We motivate the proof of Theorem \ref{random_order_imperfect}  with the following remarks. 
For an $\omega\in\Omega$, 
in order that $s_\omega$ be extended continuously to $X$, 
it is necessary that for each $n$, there exists an $N$ such that 
knowledge of any path, $x$, in $X\setminus X_\text{max}$ up to level
$N$ determines $s_\omega(x)$ up to level $n$. Conversely to show 
$\omega$ is imperfect, one shows that there exists an $n$ such that
for each $N$, the first $N$ terms
of $x$ do not determine the first $n$ terms of $s(x)$. 
In fact, we show that for any $n$ and $N$, there exists a sequence of values,
$K$, such that if one considers the collection, $\mathcal M_{K}$ 
of finite paths from the $K$th 
level to the root that are non-maximal in the $K$th edge, but maximal in
all prior edges, the set of $\omega$ such that the first $N$ edges of $x$
determine the first $n$ edges of $s_\omega(x)$ is of measure 0.
The following lemma provides a key combinatorial estimate that we use
in the proof of  Theorem \ref{random_order_imperfect}. 
We make use of the obvious correspondence
between the collection of orderings on a set $S$ with $n$ elements, and
the collection of bijections from $\{1,2,\ldots,n\}$ to $S$.

\begin{lemma}\label{lem:Bing}
Let $S$ be a finite set of size $n$ and let $F$ and $G$ be 
maps from $S$ into a set $R$ with $G$ non-constant.
Let the set  $\Sigma$ of  total orderings on $S$ be
equipped with the uniform probability measure $\mathbb P$.

Then 
\begin{equation}\label{estimate}
\mathbb P(O)\le \tfrac 1{n-1}
\text{, where }O=\big\{\sigma \in \Sigma\colon F(\sigma(i))=G(\sigma(i+1))
\text{ for all $1\le i<n$}\big\}.
\end{equation}
\end{lemma}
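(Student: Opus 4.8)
The plan is to think of a uniformly random total ordering $\sigma$ on $S$ as being built up one element at a time, and to control the conditional probability that the defining constraint of $O$ is satisfiable at each new step. Concretely, I would reveal $\sigma(1),\sigma(2),\ldots,\sigma(n)$ in order, and at stage $i+1$ ask: given $\sigma(1),\ldots,\sigma(i)$, how many choices of $\sigma(i+1)$ (among the remaining $n-i$ elements) are consistent with $O$, i.e. satisfy $F(\sigma(i))=G(\sigma(i+1))$? If the event $O$ is to occur, this constraint pins down $G(\sigma(i+1))$ to a single value of $R$, so $\sigma(i+1)$ must lie in the set $G^{-1}(F(\sigma(i)))\setminus\{\sigma(1),\ldots,\sigma(i)\}$. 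The key observation is that since $G$ is non-constant, no single fiber $G^{-1}(r)$ can contain all of $S$; I would use this to argue that at one well-chosen stage the number of admissible choices is small compared to $n-i$, yielding the factor $\tfrac1{n-1}$.

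More precisely, I expect the cleanest route is the following. Fix any $r^*\in R$ with $|G^{-1}(r^*)|\le n-1$; such an $r^*$ exists because $G$ is non-constant (indeed every value of $G$ has this property, since if some fiber had size $n$ then $G$ would be constant). Now condition on the position of the last element of $S\setminus G^{-1}(r^*)$ in the order $\sigma$; call this position $i+1$, so $\sigma(i+1)\notin G^{-1}(r^*)$ and $\sigma(j)\in G^{-1}(r^*)$ for all $j>i+1$. On the event $O$ we need $G(\sigma(i+2))=F(\sigma(i+1))$, but $\sigma(i+2)\in G^{-1}(r^*)$ forces $F(\sigma(i+1))=r^*$ whenever $i+1<n$. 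Rather than chase this particular conditioning, I would more robustly bound $\mathbb P(O)$ by summing over $i$ of the probability that $\sigma(1),\dots,\sigma(i)$ admit \emph{some} valid continuation and then the fraction of valid $\sigma(i+1)$, and extract a single bottleneck step; one clean way is: for each element $t\in S$, on $O$ the successor of $t$ in $\sigma$ (if $t\ne\sigma(n)$) is forced to lie in $G^{-1}(F(t))$, a set of size at most $n-1$ (again because $G$ is non-constant, so $|G^{-1}(F(t))|\le n-1$ unless $G^{-1}(F(t))=S$, which cannot happen), so by a union/averaging argument over which element is $\sigma(1)$ and a telescoping product one gets $\mathbb P(O)\le\prod$ of ratios, dominated by the single factor $\tfrac{n-1}{n-1}\cdot\frac{1}{\,\cdot\,}$ — I will arrange the counting so exactly one factor is $\le \tfrac1{n-1}$ and the rest are $\le 1$.

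The most economical formulation I would actually write: count pairs $(\sigma,\text{position})$. For $\sigma\in O$, the value $G(\sigma(2))$ equals $F(\sigma(1))$, so $\sigma(2)$ is determined to lie in $G^{-1}(F(\sigma(1)))$. Summing the uniform probability over the first element: $\mathbb P(O)\le \frac1{n!}\sum_{t\in S}\big|\{\sigma\in O:\sigma(1)=t\}\big|$, and for fixed $\sigma(1)=t$ the number of choices for $\sigma(2)$ is at most $|G^{-1}(F(t))\setminus\{t\}|\le n-1$, while $\sigma(3),\ldots,\sigma(n)$ range over at most $(n-2)!$ orderings of the rest — giving $\mathbb P(O)\le \frac{n\cdot(n-1)\cdot(n-2)!}{n!}=\frac{1}{1}$, which is too weak, so the naive bound at the \emph{first} step is insufficient and I must instead exploit the constraint at \emph{every} step simultaneously.

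Thus the real argument is a telescoping one: writing $O$ as the intersection over $1\le i<n$ of the events $A_i=\{F(\sigma(i))=G(\sigma(i+1))\}$, I would compute
\begin{equation}
\mathbb P(O)=\prod_{i=1}^{n-1}\mathbb P\big(A_i \,\big|\, A_1,\ldots,A_{i-1}\big),
\end{equation}
reveal $\sigma$ left-to-right, and bound each conditional factor: given $\sigma(1),\ldots,\sigma(i)$ (consistent with $A_1,\ldots,A_{i-1}$), the element $\sigma(i+1)$ is uniform among the $n-i$ unused elements, and $A_i$ holds iff it lands in $G^{-1}(F(\sigma(i)))$, a set meeting the unused elements in at most $\min(n-i,\,|G^{-1}(F(\sigma(i)))|)$ points. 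Since $G$ is non-constant, $|G^{-1}(r)|\le n-1$ for every $r$, so at the step $i=1$ we get $\mathbb P(A_1)\le \frac{n-1}{n}$ — still not enough alone. The point that closes the gap: $G$ non-constant means its image has $\ge 2$ elements, so \emph{some} fiber has size $\le \lfloor n/2\rfloor\le n-1$; better, I would track the last unused element as above. The hard part — and the one step I expect to require genuine care rather than bookkeeping — is organizing the conditioning so that the saving of a factor $\tfrac1{n-1}$ (equivalently, losing all but one element as a ``free'' choice somewhere) is captured cleanly; I anticipate the slick proof reveals the order \emph{from the right}, $\sigma(n),\sigma(n-1),\ldots$, so that $\sigma(i)$ is forced into the single-point-or-empty set $F^{-1}(G(\sigma(i+1)))$'s preimage... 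I will instead present the clean version: reveal right-to-left, note $\sigma(i)\in\{t: F(t)=G(\sigma(i+1))\}$, and since choosing $\sigma(n)$ last in a fiber of $G$ of size $<n$ happens with the right probability, extract $\tfrac1{n-1}$. I will write this final version carefully in the proof.
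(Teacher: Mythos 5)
There is a genuine gap: your proposal never actually establishes the bound. It surveys several strategies, concedes that the naive first-step bound only gives $\tfrac{n-1}{n}$, and ends by deferring the decisive step (``I will write this final version carefully in the proof'') without supplying it. Moreover, the specific plan you state -- reveal $\sigma$ sequentially and arrange matters so that ``exactly one factor is $\le \tfrac1{n-1}$ and the rest are $\le 1$'' -- cannot work as described. Consider the extremal configuration in which one element $a$ has $F(a)=v_2$, one element $b$ has $G(b)=v_2$, and all other elements have $F=G=v_1$: then $O$ is exactly the event that $b$ immediately follows $a$, or $b$ is first and $a$ is last, so $\mathbb P(O)=\bigl((n-1)!+(n-2)!\bigr)/n!=\tfrac1{n-1}$, and the bound is attained with equality. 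In a left-to-right revelation of this example, almost every conditional factor is close to $1$, and the one constrained step (the element after $a$) has conditional probability $1/(\text{number of elements remaining})$, which exceeds $\tfrac1{n-1}$ whenever $a$ appears late; the total $\tfrac1{n-1}$ only emerges after averaging over the position of $a$, i.e.\ from a genuinely global accounting, not from a single bottleneck factor. Your right-to-left variant has an additional flaw: it constrains $\sigma(i)$ to lie in $F^{-1}(G(\sigma(i+1)))$, but the hypothesis makes only $G$ non-constant, and $F$ may be constant, so those fibers can be all of $S$ and yield no saving at any step.

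The paper's proof supplies exactly the global mechanism your sketch is missing: each element $t\in S$ is encoded as a directed edge $(G(t),F(t))$ in a multigraph on the common range, so that orderings in $O$ correspond bijectively to Eulerian trails; the number of trails is then bounded by counting admissible orderings of the out-edges at each vertex, and the non-constancy of $G$ guarantees at least two distinct sources, hence maximum out-degree at most $n-1$, which caps the count at $n(n-2)!$ out of $n!$ orderings and gives $\tfrac1{n-1}$. (In the extremal example above, this graph viewpoint is also what explains the clean answer: passing to the Eulerian circuit, the event becomes ``$b$ is the cyclic successor of $a$,'' which has probability exactly $\tfrac1{n-1}$.) To complete your argument you would need either to reproduce this trail-counting step or to find some other device that aggregates the constraints across all positions simultaneously; bounding conditional probabilities step by step, in either direction, does not suffice.
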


\begin{proof}

Let $V$ be the union of the range of $F$ and the range of $G$. 
Form a directed multigraph $\mathcal G=(V,E)$ as follows. 
For $1\leq i\leq n$, define the ordered pair $e_i=(G(i),F(i))$. 
Let $E=\{ e_1,e_2,\dots ,e_n\}$.  Now let $\sigma\in O $. 
Then for $1\leq i<n$, the range of $e_{\sigma(i)}$ equals the source of 
$e_{\sigma(i+1)}$. Therefore, $e_{\sigma(1)}e_{\sigma(2)}\dots e_{\sigma(n)}$ 
is an Eulerian trail in $ \mathcal G$. 

It is straightforward to check that the map from $O$ to 
Eulerian trails is bijective, and thus we need to bound the number of 
Eulerian trails in $\mathcal G$.  
To do this, note that each Eulerian trail induces an ordering on the out-edges of each vertex. 
Let $V=\{ v_1,\dots , v_k\}$, and let $n_i$ be the number of out-edges 
of $v_i$. Since $G$ is non-constant, there are at least two directed 
edges with different sources, and thus $n_i\leq n-1$ for $1\leq i\leq k$. 
The number of orderings of out-edges is $n_1!n_2!\dots n_k!$. 

We distinguish two cases. If all vertices have out-degree equal to in-degree, then each Eulerian trail is in fact an Eulerian circuit. 
An Eulerian circuit corresponds to $n$ different Eulerian trails, 
distinguished by their starting edge. To count the number of circuits, 
we may fix a starting edge $e^*$, and then note that each circuit induces 
exactly one out-edge ordering if we start following the circuit at this edge. 
Note that in each such ordering, the edge $e^*$ must be the first in the 
ordering of the out-edges of its source. We may choose $e^*$ such that 
its source, say $v_1$, has maximum out-degree. Thus the number of 
compatible out-edge orderings is at most $(n_1-1)!n_2!\dots n_k!$
This expression is maximized, subject to the conditions  
$n_1+n_2+\dots +n_k\leq n$ and $n_i\leq n_1\leq n-1 $ for 
$1\leq i\leq k$,  when  $k=2$ and $n_1=n-1$, $n_2=1$. Therefore, 
there are at most $(n-2)!$ Eulerian circuits, so at most $n(n-2)!$ 
Eulerian trails and elements of $O$. 

If not all vertices have out-degree equal to in-degree, then either no 
Eulerian trail exists and the lemma trivially holds, or exactly one vertex, 
say $v_1$, has out-degree greater than in-degree, and this vertex must be 
the starting vertex of every trail. In this case, an ordering of out-edges 
from all vertices precisely determines the trail. The number of out-edge orderings 
(and good bijections) in this case is bounded above by $(n-1)!$.

Therefore, $O$ consists of at most $n(n-2)!$ orderings (out of $n!$), and the lemma follows.
\end{proof}

\begin{proof}[Proof of Theorem \ref{random_order_imperfect}]
Note that if $|V_n|=1$ for infinitely many $n$, then any order on $B$ has exactly 
one maximal and one minimal path. So we assume that $|V_n|\geq 2$ for all large $n$.

We first define some terminology. Recall that $s(e)$ and $r(e)$ denote the 
source and range  of the edge $e$ respectively. Given an order 
$\om \in \mathcal O_B$, we let $e_{\alpha,\om}(v)$ be the
$\alpha$th edge in $r^{-1}(v)$
If $v\in V_{N'}$ for some $N'>n$, 
we let $t_{n,\omega}(v)$
be the element of $V_n$ that the maximal incoming path to $v$ goes through.
We call $t_{n,\omega}(v)$ the $n$-\emph{tribe} of $v$. Similarly the $n$-\emph{clan}
of $v$, $c_{n,\omega}(v)$ is the element of $V_n$ through which the minimal 
incoming path to $v$ passes. 
If $n$ is such that for any $N>n$, the elements of $V_N$ belong to 
at least two $n$-clans (or $n$-tribes),
we shall say  that $\om$ has at least two infinite $n$-clans 
(or $n$-tribes.)
   
Let  $N>n$ and define $C_{n,N}$ to be the set of orders $\om$
such that if the non-maximal paths  
$x$ and $y$ agree  to level $N$, then their successors $s_\om(x)$ and  $s_\om(y)$  
agree to level $n$. Note that $\mathcal P_B \subset
\bigcap_{n=1}^\infty  \bigcup_{N=n}^{\infty}C_{n,N}$. In what follows we show that  
this last set has zero mass.
   
Fix $n$ and $N$  with $N>n$, and take any $N'>N$. 
Any order $\om\in C_{n,N}$ must satisfy the following constraints:
given any  two non-maximal edges whose sources in $V_{N'}$
belong to the same $N$-tribe,  their successors must belong 
to the same $n$-clan. In particular, 
if  $v$ and $v'$ are vertices in $V_{N'}$  such that the sources of
$e_{\alpha,\om}(v)$ and $e_{\beta,\om}(v)$
belong to the same $N$-tribe,  where $\alpha$ 
and $\beta$ are both non-maximal, then the  sources of 
$e_{\alpha+1,\om}(v)$ and $e_{\beta+1,\om}(v')$
must belong to the same $n$-clan. That is, there
is a map $f\colon V_{N}\to V_n$ such that for any $v\in V_{N'}$ 
and any non-maximal $\alpha$,
$f(t_{N,\om}(s(e_{\alpha,\om}(v))))=c_{n,\om}(s(e_{\alpha+1,\om}(v)))$.
We think of this $f$ as mapping $N$-tribes to $n$-clans. This is illustrated in
Figure \ref{fig:fcond}.

\begin{figure}
\includegraphics[width=2in]{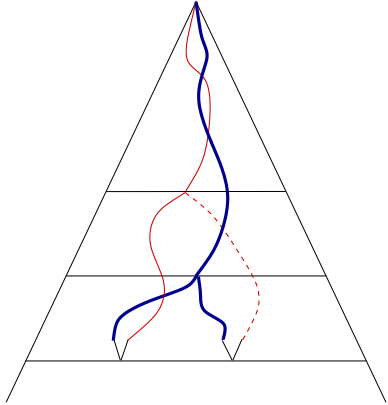}
\caption{The maximal upward paths from
  $s(e_{\alpha,\om}(v))$ and 
$s(e_{\beta,\om}(v'))$ (blue and bold) agree above level $N$, so the 
minimal upward path from
path with range $s(e_{\beta+1,\om}(v'))$ (red, dashed, not bold) 
must hit the same vertex in the $n$th level as the
minimal  upward path from
$s(e_{\alpha+1,\om}(v))$ (red, solid, not bold).}
\label{fig:fcond}
\end{figure}

Motivated by the preceding remark, if $N'>N>n$, we define two subsets of 
$\mathcal O_B$. 
We let $D_{n,N'}$ be the set of orders such that $V_{N'}$ 
contains members of at least two $n$-clans;
and $E_{n,N,N'}$ to be the subset of orders in $\mathcal D_{n,N'-1}$ which 
additionally satisfy the condition (*):
  
\begin{quote}
There is a function $f\colon V_N\to V_n$ such that for all $v \in V_{N'}$, 
if $\alpha$ is a non-maximal edge entering $v$ then
$f(t_{N,\omega}(s(e_{\alpha,\omega}(v))))=c_{n,\omega}(s(e_{\alpha+1,\omega}(v)))$.
\end{quote}
 
We observe that $D_{n,N'}$ and $E_{n,N,N'}$ 
are $\mathcal F_{N'}$-measurable.
We compute $\mathbb P(E_{n,N,N'}|\mathcal F_{N'-1})$. 
Since $D_{n,N'-1}$ is $\mathcal F_{N'-1}$ measurable, we 
have $\mathbb P(E_{n,N,N'}|\mathcal F_{N'-1})(\om)$ is 0
for $\om\not\in D_{n,N'-1}$. 
For a fixed map $f\colon V_N\to V_n$, and a fixed vertex $v\in V_{N'}$,
and $\om\in D_{n,N'-1}$, the conditional probability given $\mathcal F_{N'-1}$
that (*)
with the specific function $f$ is satisfied at $v$ is at most $1/(|V_{N'-1}|-1)$.
To see this, notice that for $\om\in D_{n,N'-1}$, the $n$-clan
is a non-constant function of $V_{N'-1}$, so that the hypothesis of 
Lemma \ref{lem:Bing} is satisfied,  with $F=f\circ t_{N,\om}\circ s$ and 
$G=c_{n,\om}\circ s$, both applied to the set of incoming edges to $v$.
Also, since $B$ is completely connected, there are at least $|V_{N'-1}|$ edges coming into $v$.

Since these are independent events conditioned on $\mathcal F_{N'-1}$, 
the conditional probability that (*) is satisfied for 
the fixed function $f$ over all $v\in V_{N'}$ is at most
$1/(|V_{N'-1}|-1)^{|V_N'|}$. There are $|V_n|^{|V_N|}$ possible functions
$f$ that might satisfy (*). 
Hence we obtain
$$
\mathbb P(E_{n,N,N'})\le \frac{|V_n|^{|V_N|}\mathbb P(D_{n,N'-1})}
{(|V_{N'-1}|-1)^{|V_{N'}|}},
$$
so that for fixed $n$ and $N$ with $n<N$, one has 
$\liminf_{N'\to\infty}\mathbb P(E_{n,N,N'})=0$.
By the hypothesis, for any $\epsilon>0$, there exists
$m(\epsilon)$ such that $\mathbb P(R_n)>1-\epsilon$ for all $n>m(\epsilon)$, where
$R_n=\{\om\in\mathcal O_B\colon \text{$\om$ has
at least 2 infinite $n$-clans}\}$.

Since  $C_{n,N}\cap R_n\subset E_{n,N,N'}$ for all $N'>N>n$, we conclude that 
$\mathbb P(C_{n,N}   \cap R_n )=0$ for $N>n$, so that 
$\mathbb P(C_{n,N}) \leq \epsilon$ for $ÊN>n>m(\epsilon)$.
Now since $\mathcal P_B \subset \bigcap_{n=1}^\infty  
\bigcup_{N=n}^{\infty}C_{n,N}$ and $C_{n,N} \subset C_{n,N+1}$ for each $N\geq n$, 
we conclude that $\mathbb P( \mathcal P_B)=0$.
\end{proof}

\section{Diagrams whose orders are almost always imperfect}\label{results}

\subsection{Bratteli diagrams and the Wright-Fisher model}\label{special_case} 
Let $\mathbf1_{a\times b}$ denote the $a\times b$ matrix
all of whose entries are 1. If $V_n$ is the $n$-th vertex set in B,
define $M_n= |V_n|$. In this sub-section, all Bratteli diagrams that we 
consider have incidence matrices $F_n=\mathbf 1_{M_{n+1}\times M_n}$ for each $n$.

We wish to give conditions on $(M_n)$ so that a $\PP$-random order has 
infinitely many maximal paths. We first comment on the relation between 
our question and the Wright-Fisher model in population genetics. 
Given a subset $A\subset V_k$, and an ordering $\omega \in \mathcal O_B$, 
we let $S^\omega_{k,n}(A)$ for $n>k$
be the collection of vertices $v$ in $V_n$ such that the unique 
upward maximal path in the $\omega$ ordering through
$v$ passes through $A$.   
Informally, when we consider the tree formed by all maximal edges
then $S^\om_{k,n}(A)$ is the set of vertices in $V_n$ that have 
``ancestors" in $A$.

Let $Y_n=|S^\om_{k,n}(A)|/M_n$. We observe that conditional on $Y_n$, 
each vertex in $V_{n+1}$ has probability $Y_n$ of belonging to
$S^\om_{k,n+1}(A)$ (since each vertex in $V_{n+1}$ chooses an
independent ordering of $V_n$ from the uniform distribution), so
that the distribution of $|S^{\om}_{k,n+1}(A)|$ conditional on $Y_n$
is binomial with parameters $M_{n+1}$ and $Y_n$.
In particular, $(Y_n)$ is a martingale with respect to the 
natural filtration $(\mathcal F_n)$, where $\mathcal F_n$ is the 
$\sigma$-algebra generated by  the $n$th level cylinder sets.
Since $(Y_n)$ is a bounded martingale, it follows from the 
martingale convergence theorem that $(Y_n)$  almost surely converges to 
some limit $Y_\infty$ where $0\leq Y_\infty \leq 1$. 

It turns out that the study of maximal paths 
is equivalent to the Wright-Fisher model in population
genetics. Here one studies populations where there are disjoint generations; each 
population member inherits an allele (gene type) from a uniformly randomly 
chosen member of the previous generation. 
To compare the randomly ordered Bratteli diagram and Wright-Fisher models,
the vertices in $V_n$ represent the $n$th generation and 
a vertex $v\in V_{n+1}$ ``inherits an allele"
from $w\in V_n$ if the edge from $w$ to $v$ is the maximal incoming edge to $v$. 
Since for each $v$, one of the $M_n!$ orderings of $V_n$ is chosen uniformly
at random, the probability that any element of $V_n$ is the source of the
maximal incoming edge to $v$ is $1/M_n$. Since the orderings are chosen
independently, the ancestor of $v\in V_{n+1}$ is independent of the ancestor
of any other $v'\in V_{n+1}$.

Analogous to $Y_n$, in the Wright-Fisher context, one studies 
the proportion of the population that 
have various alleles. If one declares the vertices in $A \subset V_k$ 
to have allele type \textbf{A} and the other vertices in that level 
to have allele type \textbf{a}, then there is a maximal path through $A$ if and only
if in the Wright-Fisher model, the allele \textbf{A} persists - that is there exist 
individuals in all levels beyond the $n$th with type \textbf{A} alleles.

In a realization of the Wright-Fisher model, an allele type is said to \emph{fixate}
if the proportion $Y_n$ of individuals with that allele type in the $n$th level 
converges to 0 or 1 as $n\to\infty$. An allele type is said to become 
\emph{extinct} if $Y_n=0$ for some finite level,
or to \emph{dominate} if $Y_n=1$ for some finite level.

\begin{theorem} \cite[Theorem 3.2]{donnelly} \label{thm:Donnelly} 
Consider a Wright-Fisher model with population structure $(M_n)_{n\ge 0}$. 
Then domination of one of the alleles occurs almost surely if and only if  
$\sum_{n\ge 0}1/M_n=\infty$.
\end{theorem}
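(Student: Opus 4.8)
The plan is to study the type-proportion process together with a product (super)martingale it carries. Write $M_n=|V_n|$, let $N_n$ be the number of individuals of type $\mathbf a$ at level $n$, and set $Y_n=(M_n-N_n)/M_n$, the proportion of type $\mathbf A$; we may assume the initial configuration has both types present, since otherwise domination trivially holds. As noted in the text, conditionally on $\mathcal F_n$ the count $N_{n+1}$ is $\mathrm{Binomial}(M_{n+1},N_n/M_n)$, so $(Y_n)$ is an $(\mathcal F_n)$-martingale; computing the conditional second moment of a binomial gives the identity $\mathbb E[\,W_{n+1}\mid\mathcal F_n\,]=W_n\,(1-1/M_{n+1})$ for $W_n:=Y_n(1-Y_n)$. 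In particular $W_n$ is a bounded nonnegative supermartingale, hence converges a.s.\ and in $L^1$ to some $W_\infty$ with $\mathbb E[W_\infty]=W_0\prod_{k\ge 1}(1-1/M_k)$. We may also assume $M_n\ge 2$ for all large $n$, since otherwise $M_n=1$ infinitely often, every order dominates, and $\sum 1/M_n=\infty$, making the statement trivial.

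One direction is easy: I show that if $\sum 1/M_n<\infty$ then domination fails with positive probability (the contrapositive of ``domination a.s.\ $\Rightarrow$ $\sum 1/M_n=\infty$''). Indeed $\sum 1/M_n<\infty$ gives $\prod_k(1-1/M_k)>0$, so $\mathbb E[W_\infty]>0$ and hence $\mathbb P(W_\infty>0)>0$; on $\{W_\infty>0\}$ the proportion $Y_n$ stays bounded away from $0$ and from $1$, so both types are present at every level and neither allele ever dominates.

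For the converse assume $\sum 1/M_n=\infty$. Since $\log(1-x)\le -x$, this forces $\prod_{k\le n}(1-1/M_k)\to 0$, so $\mathbb E[W_n]\to 0$; as $W_n\ge 0$ and $W_n\to W_\infty$ a.s., we get $W_\infty=0$ a.s., i.e.\ $Y_n\to Y_\infty\in\{0,1\}$ almost surely. This is fixation of one allele. What remains — and this is the crux — is to upgrade fixation to \emph{domination}: one must show that almost surely $Y_n$ actually equals $0$ or $1$ at some finite level, i.e.\ that the event $D:=\{\,1\le N_n\le M_n-1\text{ for all }n\,\}$ is null.

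The plan for this is a conditional Borel--Cantelli argument. Let $B_n$ be the event ``domination occurs at level $n+1$'' and let $L_n=\min(N_n,M_n-N_n)$ be the minority count; note $L_n/M_n$ is comparable to $W_n$, and that $\mathbb P(B_n\mid\mathcal F_n)\ge (1-L_n/M_n)^{M_{n+1}}\ge\exp(-2L_nM_{n+1}/M_n)$ once $L_n/M_n\le\tfrac12$, which holds for all large $n$ on $\{W_\infty=0\}$. Since $\sum 1/M_n=\infty$ precludes $M_{n+1}/M_n\to\infty$ (that would make $M_n$ grow geometrically and $\sum 1/M_n$ converge), there are a constant $B$ and a \emph{deterministic} subsequence $(n_i)$ with $M_{n_i+1}\le B M_{n_i}$, along which $\mathbb P(B_{n_i}\mid\mathcal F_{n_i})\ge e^{-2BL_{n_i}}$. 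By L\'evy's extension of the Borel--Cantelli lemma (applied to the events $B_{n_i}$ along the filtration $(\mathcal F_{n_i})$), $D\subseteq\{\sum_i\mathbb P(B_{n_i}\mid\mathcal F_{n_i})<\infty\}$ a.s., so on $D$ one must have $L_{n_i}\to\infty$. The main obstacle is to show this is a.s.\ impossible, i.e.\ that the minority count cannot escape to infinity. In the regime where $M_n\prod_{k\le n}(1-1/M_k)$ stays bounded (e.g.\ $M_n=O(n)$) this is immediate: $W_n/\prod_{k\le n}(1-1/M_k)$ is a nonnegative martingale, hence bounded along a.e.\ path, and since $L_n$ is comparable to $M_nW_n$ this makes $\sup_nL_n$ a.s.\ finite, contradicting $L_{n_i}\to\infty$ on $D$, so $\mathbb P(D)=0$. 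In general one instead compares the minority count, while it is small relative to the population, with a critical Galton--Watson process: the binomial offspring law has mean $M_{n+1}/M_n$, which the subsequence keeps near $1$, and a critical branching process dies out a.s.\ and in particular does not diverge. The delicate point, where real care is needed, is that $M_n$ may balloon between consecutive good times $n_i$, inflating the conditional expectation of $L_{n_i}$; handling this requires either choosing the good times so that the population cannot grow much across a block, or running a direct second-moment estimate for $L_n$ over each block. Granting this, $\mathbb P(D)=0$, domination occurs almost surely, and the theorem follows.
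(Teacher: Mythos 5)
Your setup and the two easier pieces are fine: the martingale property of $Y_n$, the identity $\mathbb E[W_{n+1}\mid\mathcal F_n]=W_n(1-1/M_{n+1})$, the deduction that $\sum_n 1/M_n<\infty$ forces $\mathbb E[W_\infty]>0$ and hence that domination fails with positive probability, and the fixation argument ($\mathbb E W_n\to 0$, so $Y_\infty\in\{0,1\}$ a.s.) when $\sum_n 1/M_n=\infty$. Note, though, that this is all the paper itself contains: Theorem~\ref{thm:Donnelly} is not proved in the paper but quoted from \cite[Theorem 3.2]{donnelly}, and only the fixation computation (the $Q_n=Y_n(1-Y_n)$ estimate) is reproduced ``to indicate the flavour of the arguments''. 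Your first two parts essentially coincide with that sketch.

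The genuine gap is the step you yourself flag as the crux, which is exactly the content that the paper defers to \cite{donnelly}: upgrading fixation to domination, i.e.\ showing $\mathbb P(D)=0$ where $D$ is the event that both types are present at every level. Your conditional Borel--Cantelli reduction is sound as far as it goes: along a deterministic subsequence $(n_i)$ with $M_{n_i+1}\le BM_{n_i}$ (which exists because $\sum_n 1/M_n=\infty$ rules out eventual geometric growth), on $D$ one must have $L_{n_i}\to\infty$. But the remaining claim --- that $L_{n_i}\to\infty$ is almost surely impossible on $D$ --- is only proved in the special regime where $M_n\prod_{k\le n}(1-1/M_k)$ stays bounded. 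In general, $\sum_n 1/M_n=\infty$ is compatible with arbitrarily large bursts of growth of $M_n$ between consecutive good times $n_i$, and across such a burst the conditional mean of the minority count is multiplied by $M_{n+1}/M_n$, which you do not control; the proposed remedies (choosing good times so the population cannot grow much across a block, a per-block second-moment estimate, or a comparison with a critical Galton--Watson process) are only named, not carried out, and the Galton--Watson comparison as stated is not correct anyway, since along your subsequence the offspring mean is merely bounded by $B$, not close to $1$. The conclusion therefore rests on a ``granting this'': as written, the argument establishes fixation but not domination, and that missing implication is precisely the hard part of Donnelly's theorem.
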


Theorem \ref{thm:Donnelly} also holds if in the Wright-Fisher model, 
individuals can inherit one of $r$ alleles with $r\geq 2$.
We exploit this below by letting each member of a chosen generation 
have a distinct allele type. 

To indicate the flavour of the arguments, 
we give a proof of the simpler fact that if $\sum_{n\ge 0}1/M_n=\infty$
then each allele type fixates.  
To see this, let $Q_n=Y_n(1-Y_n)$. 
Now we have \[\E(Q_n|\mathcal F_{n-1})
=Y_{n-1}-Y_{n-1}^2-(\E(Y_n^2|Y_{n-1})-\E(Y_n|Y_{n-1})^2)
=Q_{n-1}-\Var(Y_n|Y_{n-1}).\]
Since $M_nY_n$ is binomial  with parameters $M_n$ and  $Y_{n-1}$, 
\[
\Var(Y_n|Y_{n-1})=
(1/M_n^2)(M_nY_{n-1}(1-Y_{n-1}))=Q_{n-1}/M_n.
\] 
This gives $\E(Q_n|\mathcal F_{n-1})=
(1-1/M_n)Q_{n-1}$. Now using the tower property of conditional expectations, we have
$\E Q_n=\E(Q_n|\mathcal F_0)=\prod_{j=1}^n(1-1/M_j)\E Q_0$, which converges to 0. 
As noted above, the sequence $(Y_n(\omega))$ is convergent for almost all 
$\omega$ to 
$Y_\infty(\omega)$
say. It follows that $Q_n(\omega)$ converges pointwise to $Y_\infty(1-Y_\infty)$. 
By the bounded convergence theorem, we deduce that $\E Y_\infty(1-Y_\infty)=0$,
so that $Y_\infty$ is equal to 0 or 1 almost everywhere.

We shall use Theorem \ref{thm:Donnelly}  to prove the first part of the 
following theorem.

\begin{theorem}\label{thm:dichot}
Consider a Bratteli diagram with $M_n\ge 1$ vertices in the $n$th 
level and whose incidence matrices are all of the form $\mathbf 1_{M_{n+1}\times M_n}$.
We have the following dichotomy:
 
If $\sum_n 1/M_n=\infty$, then there is $\mathbb P$-almost surely 
a unique maximal path.

If $\sum_n 1/M_n<\infty$, then there are $\mathbb P$-almost
surely uncountably many maximal paths. 
\end{theorem}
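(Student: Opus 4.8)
\emph{Proof plan.}

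\textbf{The first part ($\sum_n1/M_n=\infty$)} follows directly from Theorem~\ref{thm:Donnelly}. We may assume $M_n\ge2$ for all large $n$, since otherwise every ordering of $B$ already has a unique maximal path. Fix a level $k$ and run the Wright--Fisher model from level $k$ with each of the $M_k$ vertices of $V_k$ carrying a distinct allele; by the multi-allele form of Theorem~\ref{thm:Donnelly} noted after its statement, almost surely one allele dominates, i.e.\ $S^\om_{k,n}(\{v\})=V_n$ for some $v\in V_k$ and some finite $n$, so that every maximal path of $(B,\om)$ passes through $v$ at level $k$. Intersecting these almost sure events over all $k$ shows that $\mathbb P$-almost every $\om$ has, at every level, a single vertex through which all maximal paths pass; since $X_{\max}(\om)\ne\emptyset$, it is a singleton.

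\textbf{The second part ($\sum_n1/M_n<\infty$)} is the substantial one. We view a maximal path as an infinite branch in the \emph{maximal-ancestor forest}, where each $v\in V_n$ ($n\ge1$) is joined to the source of the unique maximal edge into it. The plan is to show that for every $\delta>0$ there is an event of probability at least $1-\delta$ on which the Cantor set $\{0,1\}^{\mathbb N}$ embeds topologically into $X_{\max}(\om)$; letting $\delta\to0$ then gives $|X_{\max}(\om)|\ge2^{\aleph_0}$ almost surely (and the reverse inequality is automatic, $X_B$ being metrizable). This route requires no zero--one law, only that $\{\om:|X_{\max}(\om)|\ge2^{\aleph_0}\}$ lies in $\mathcal F$, which is exactly the measurability point flagged in Section~\ref{Preliminaries}.

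\textbf{The key tool} is a martingale estimate. For $A\subseteq V_m$ put $Y_n=|S^\om_{m,n}(A)|/M_n$ for $n\ge m$. Exactly as in the computation above, $M_nY_n$ is binomial$(M_n,Y_{n-1})$ conditionally on $\mathcal F_{n-1}$, so $(Y_n)_{n\ge m}$ is a bounded martingale with $\E\big(Y_n(1-Y_n)\mid\mathcal F_m\big)=\big(\prod_{j=m+1}^n(1-1/M_j)\big)Y_m(1-Y_m)$; hence, with $c_m:=\prod_{j>m}(1-1/M_j)$,
\[
\E\big((Y_n-Y_m)^2\mid\mathcal F_m\big)=Y_m(1-Y_m)\Big(1-\prod_{j=m+1}^n(1-1/M_j)\Big)\le Y_m(1-c_m)\qquad(n\ge m).
\]
Since $\sum_j1/M_j<\infty$ we have $c_m\to1$, so by Chebyshev's inequality, given $\mathcal F_m$ and a fixed later level $n$, one has $Y_n\ge Y_m/2$ outside a set of conditional probability at most $4(1-c_m)/Y_m$. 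Using this, we build stage by stage pairwise disjoint ``blocks'' $A_\sigma\subseteq V_{k_s}$ indexed by $\sigma\in\{0,1\}^s$ with $|A_\sigma|\ge\epsilon_sM_{k_s}$, where $\epsilon_s:=8^{-s}$: we start with $A_\emptyset:=V_{k_0}$, split each $A_\sigma$ into two halves in $V_{k_s}$ (each then of density at least $\tfrac12\epsilon_s-1/M_{k_s}\ge\epsilon_s/3$ provided $M_{k_s}\ge6/\epsilon_s$), and take the two children $A_{\sigma0},A_{\sigma1}\subseteq V_{k_{s+1}}$ to be the sets of maximal descendants at level $k_{s+1}$ of these halves. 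The estimate shows each child has density at least $\epsilon_s/6\ge\epsilon_{s+1}$ at level $k_{s+1}$ except on a set of conditional probability at most $12\cdot8^s(1-c_{k_s})$; choosing the levels $k_s$ so rapidly that $1-c_{k_s}\le\delta\,2^{-s-1}/(24\cdot16^s)$ (possible since $c_m\to1$ and $M_n\to\infty$) bounds the total probability of any of the $2^{s+1}$ failures, summed over all $s$, by $\delta$. On the complementary event, for every $\sigma\in\{0,1\}^{\mathbb N}$ the infinite, locally finite forest with $s$-th level $A_{\sigma|_s}$ and each vertex joined to its maximal ancestor at the previous level admits, by König's lemma, an infinite branch; this determines a single maximal path $x^\sigma\in X_{\max}(\om)$ that passes through $A_{\sigma|_s}$ at level $k_s$ for every $s$, and distinct $\sigma$ give distinct $x^\sigma$ because they route through disjoint blocks past the first level at which they differ.

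\textbf{The main obstacle} is the bookkeeping in this construction: one must reconcile the geometrically decaying density thresholds $\epsilon_s$, the doubling number $2^s$ of blocks, and the speed at which $1-c_{k_s}\to0$, and this balances precisely because $\sum_n1/M_n<\infty$ forces the tail products $c_m$ to converge to $1$ (when $\sum_n1/M_n=\infty$ one has $c_m\equiv0$ and the estimate is empty, matching the first part). Everything else --- the martingale identity, the Chebyshev step, and the extraction via König's lemma of honest maximal paths from the nested blocks --- is routine.
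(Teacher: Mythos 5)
Your proposal is correct and follows essentially the same route as the paper: the divergent case is handled exactly as in the paper via the multi-allele form of Theorem~\ref{thm:Donnelly}, and the convergent case uses the same evolve-and-split strategy, with your conditional Chebyshev bound $\E\bigl((Y_n-Y_m)^2\mid\mathcal F_m\bigr)\le Y_m(1-c_m)$ playing the role of Proposition~\ref{thm:mgslowdown} and the completion of $\mathbb P$ invoked in the same way at the end. The only differences are cosmetic: you track density thresholds $8^{-s}$ rather than keeping densities pinned near $2^{-k}$ with $4^{-(k+1)}$ errors, and you extract an explicit injection of $\{0,1\}^{\mathbb N}$ into $X_{\max}(\om)$ via K\"onig's lemma where the paper asserts surjectivity of the map $\iota$.
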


To prove the second part of this result we will need the following tool.
Recall the definition of  $S^\omega_{k,n}(A)$, from the
beginning of Section \ref{special_case}. 

\begin{proposition}\label{thm:mgslowdown}
Consider a Wright-Fisher model with population structure $(M_n)_{n\ge 0}$. 
Suppose that $\sum_{n\ge 0}1/M_n<\infty$. Then for each $\epsilon>0$ and 
$\eta>0$, there exists an $l>0$ such that for any $\mathcal F_l$-measurable
random subset, $A(\om)$, of $V_l$ (that is an $\mathcal F_l$-measurable
map $\Omega\to\mathcal P(V_l)$) and any $L>l$,
$$
\PP\left(\left|\frac{|A(\om)|}{|V_l|} - \frac{|S^\omega_{l,L}(A(\om))|}{|V_L|}\right|
\ge \eta\right)<\epsilon.
$$
\end{proposition}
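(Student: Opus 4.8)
The plan is to exploit the martingale $Y_n := |S^\omega_{l,n}(A(\om))|/M_n$ together with the variance identity already established in the text, namely $\Var(Y_n \mid Y_{n-1}) = Q_{n-1}/M_n$ where $Q_{n-1} = Y_{n-1}(1-Y_{n-1}) \le \tfrac14$. The key observation is that $(Y_n)_{n \ge l}$ is a bounded martingale whose total conditional variance is controlled by the tail $\sum_{n > l} 1/M_n$, which is small when $l$ is large. First I would note that, for any fixed $\mathcal F_l$-measurable subset $A = A(\om)$, the process $(Y_n)_{n\ge l}$ starts from $Y_l = |A|/|V_l|$ and is a martingale with respect to $(\mathcal F_n)_{n \ge l}$ by exactly the argument given for the Wright-Fisher martingale in Section \ref{special_case} (this uses only that each vertex in $V_{n+1}$ picks its maximal incoming edge independently and uniformly, which does not depend on how $A$ was chosen, as long as $A$ is determined by level $l$).

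The core estimate is a second-moment bound. Using the tower property and $\Var(Y_n\mid\mathcal F_{n-1}) = Q_{n-1}/M_n \le 1/(4M_n)$, I would show by telescoping that
\[
\E\big[(Y_L - Y_l)^2\big] = \sum_{n=l+1}^{L} \E\big[\Var(Y_n\mid\mathcal F_{n-1})\big] \le \frac14\sum_{n=l+1}^{L}\frac{1}{M_n} \le \frac14\sum_{n > l}\frac{1}{M_n}.
\]
Here the first equality is the standard orthogonality-of-martingale-increments identity: $\E[(Y_n - Y_{n-1})(Y_m - Y_{m-1})] = 0$ for $m \ne n$, and $\E[(Y_n - Y_{n-1})^2] = \E[\Var(Y_n\mid\mathcal F_{n-1})]$. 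Since $\sum_n 1/M_n < \infty$ by hypothesis, I may choose $l$ large enough that $\tfrac14\sum_{n>l} 1/M_n < \epsilon\eta^2$. Then by Markov's (Chebyshev's) inequality,
\[
\PP\big(|Y_L - Y_l| \ge \eta\big) \le \frac{\E[(Y_L-Y_l)^2]}{\eta^2} \le \frac{1}{\eta^2}\cdot\frac14\sum_{n>l}\frac{1}{M_n} < \epsilon,
\]
for every $L > l$, which is exactly the claimed bound since $Y_l = |A(\om)|/|V_l|$ and $Y_L = |S^\omega_{l,L}(A(\om))|/|V_L|$.

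The one point that needs a little care — and the main thing to get right — is the uniformity over all $\mathcal F_l$-measurable random subsets $A(\om)$. The cleanest way is to first fix a \emph{deterministic} subset $A_0 \subseteq V_l$ and run the argument above to get $\E[(Y_L - Y_l)^2 \mid A(\om) = A_0] < \epsilon\eta^2$; then sum over the finitely many atoms of $\mathcal F_l$ according to their $\PP$-probabilities, or equivalently observe that conditioning on $\mathcal F_l$ the bound $\E[(Y_L - Y_l)^2 \mid \mathcal F_l] \le \tfrac14\sum_{n>l}1/M_n$ holds pointwise (the right-hand side does not depend on $A$ at all), and then take expectations. Either way the threshold $l$ depends only on $\epsilon$ and $\eta$ through the tail $\sum_{n>l}1/M_n$, not on $A$, which is what the proposition requires. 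I do not anticipate a genuine obstacle here; the only thing to be careful about is to phrase the martingale property and the variance identity for a fixed $A_0$ first, so that the independence/uniformity of the edge orderings is being used correctly before averaging over the choice of $A(\om)$.
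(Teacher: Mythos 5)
Your proposal is correct and follows essentially the same route as the paper's proof: the same martingale $Y_n$, the same conditional-variance bound $\Var(Y_{j+1}\mid\mathcal F_j)=Y_j(1-Y_j)/M_{j+1}\le 1/(4M_{j+1})$, the same choice of $l$ via the tail of $\sum 1/M_n$, and Markov/Chebyshev at the end (your orthogonality-of-increments identity is exactly the paper's telescoping of $\E(Y_L^2-Y_l^2)$). Your explicit conditioning on $\mathcal F_l$ to handle random $A(\om)$ is a slightly more careful phrasing of a point the paper leaves implicit, not a different argument.
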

 
\begin{proof}
Let $l$ be chosen so that $\sum_{n=l+1}^\infty 1/M_n<4\epsilon\eta^2$
and let $L>l$. 
For $n>l$, let $Y_n=\big|S^\omega_{l,n}\big(A(\omega)\big)\big|/|V_n|$.
Recall that $(Y_n)$ is a martingale with respect to the filtration
$(\mathcal F_n)$.
Set $Z_n=(Y_n-Y_l)^2$ and notice that $(Z_n)_{n\ge l}$ is a bounded 
sub-martingale by the conditional expectation version of Jensen's inequality. 

We have 
\begin{align*}
\E Z_L&=\E\big(\E((Y_L-Y_l)^2|\mathcal F_l)\big)\\
&=\E\big(\E(Y_L^2-Y_l^2|\mathcal F_l)\big)\\
&=\E(Y_L^2-Y_l^2)\\
&=\sum_{j=l}^{L-1} \E(Y_{j+1}^2-Y_{j}^2).
\end{align*}
A calculation shows that 
\begin{align*}
\E(Y_{j+1}^2-Y_{j}^2|\mathcal F_j)&=
\E(Y_{j+1}^2|\mathcal F_j)-\E(Y_{j+1}|\mathcal F_j)^2\\
&=\Var(Y_{j+1}|\mathcal F_j)\\
&=\frac{Y_j(1-Y_j)}{M_{j+1}}\\
\end{align*}
so that $\E(Y_{j+1}^2-Y_j^2)\le 1/(4M_{j+1})$ and 
we obtain $\E Z_L\le \sum_{j=l+1}^L 1/(4M_j)$. 
In particular we have $\E Z_{L}\le \epsilon\eta^2$. The claim follows
from Markov's inequality.

\end{proof}

\begin{proof}[Proof of Theorem \ref{thm:dichot}]
Suppose first that $\sum_n 1/M_n=\infty$. 
We show for all $k$, with probability 1, there exists $n>k$
such that all maximal paths from each level $n$ vertex to the root
vertex pass through a single vertex at level $k$. 

To do this, we consider the $M_k$ vertices at level $k$
to each have a distinct allele type. 
By Theorem \ref{thm:Donnelly}, there is for almost every 
$\omega$, a level $n$ such that by level $n$ one of the $M_k$ allele 
types has dominated all the others.   This is  a direct translation
of the statement that we need, which is that  every maximal 
finite path with range in $V_n$ passes through the same vertex in $V_k$.

Now we consider the case $\sum_n 1/M_n<\infty$. In this case, we identify a 
sequence $(n_k)$ of levels. We start with a single allele at $v_0$; and evolve
it to level $n_1$, where it is split into two almost equal  sub-alleles. This 
\emph{evolve-and-split} operation is repeated inductively, evolving the two
alleles at level $n_1$ to level $n_2$ and splitting each one giving 
four sub-alleles and so on, so that there are
$2^k$ alleles in the generations of the Bratteli diagram between the $n_k$th
and $n_{k+1}$st. We show that with very high probability, they all persist and
maintain a roughly even share of the population. This splitting allows us to find, with 
probability arbitrarily close to one,
a surjective map from the set of  maximal paths to all possible 
sequences of 0s and 1s. 

Fix a small $\kappa>0$. Using Proposition \ref{thm:mgslowdown},
choose an increasing sequence of levels $(n_k)_{k\ge 1}$ with the property
that $M_{n_k}>4^k$ and that for any random $\mathcal F_{n_k}$-measurable subset, 
$A(\omega)$ of $V_{n_k}$,
one has with probability at least $1-\kappa 4^{-k}$,
\begin{equation}\label{eq:slowdensitychange}
\left|\frac{|S_{n_k,n_{k+1}}^\omega(A(\omega))|}{|V_{n_{k+1}}|}-
\frac{|A(\omega)|}{|V_{n_k}|}\right| < 4^{-(k+1)}.
\end{equation}

Let $n_0=0$ and let $A_\epsilon(\om)=V_0$ (here $\epsilon$ stands for the empty string). 
We inductively define a collection of $\mathcal F_{n_k}$-measurable subsets of $V_{n_k}$
indexed by strings of 0's and 1's of length $k$. Suppose that for each string $s$ of length
$k$, $A_s(\om)$ is a random $\mathcal F_{n_k}$-measurable subset of $V_{n_k}$. 
Then we let $A_{s0}(\om)$ be the first half of $S_{n_k,n_{k+1}}^\om(A_s(\om))$
and $A_{s1}(\om)$ be the second half (by the first half of a subset $A$ of $V_n$,
we mean the subset consisting of the 
first $\lceil \frac {|A|}2\rceil$ elements of $A$ with respect
to the fixed indexing of $V_n$ and the second half is the subset 
consisting of the last $\lfloor \frac {|A|}2\rfloor$ elements of $A$).
By the union bound, we see that with probability at least $1-(\sum_{k=1}^\infty 2^k
\kappa 4^{-k})=1-\kappa$, the sets satisfy for each $s\in \{0,1\}^k$,
\begin{equation}\label{eq:nonwand2}
\left|\frac{|S^\om_{n_k,n_{k+1}}(A_{s}(\om))|}{|V_{n_{k+1}}|}-
\frac{|A_s(\om)|}{|V_{n_k}|}\right|<4^{-(k+1)}.
\end{equation} 
In particular, this suffices to ensure that the sets $A_s(\om)$ are non-empty for each
finite string of 0's and 1's. Now we define a map from the collection of maximal paths
to $\{0,1\}^{\mathbb N}$: for each $k$, the $A_s(\om)$ for $s\in\{0,1\}^k$ partition 
$V_{n_k}$. Given $x\in X_\text{max}(\om)$, there is a unique sequence $\iota(x)=
i_1i_2\ldots \in \{0,1\}^{\mathbb N}$ such that the $k_n$th edge, $r(x_{k_n})\in 
A_{i_1\ldots i_n}(\om)$. The map $\iota\colon X_\text{max}(\om)\to\{0,1\}^\N$
is then continuous. For any $\omega$ satisfying \eqref{eq:nonwand2},
the map $\iota\colon X_\text{max}(\om)\to \{0,1\}^\N$ is surjective. 
Hence for each $\kappa>0$, we have exhibited a measurable subset of $\Omega$ 
with measure $1-\kappa$ for which there are uncountably many maximal paths. 
By completeness of the measure, it follows that
almost every $\omega$ has uncountably many maximal paths.

\end{proof}

\subsection{Other Bratteli diagrams whose orders support many maximal paths}

Next we partially extend the results in Section \ref{special_case} 
to  a larger family of Bratteli diagrams.

\begin{definition} \label{equal_path_number}
Let $B$ be a Bratteli diagram. 
\begin{itemize}
\item We say that $B$ is {\em superquadratic} if there exists $\delta >0$ 
so that  $M_n \geq n^{2+\delta}$ for all large $n$.
\item
Let $B$ be superquadratic with constant $\delta$. We say that $B$ is 
{\em exponentially bounded} if
$\sum_{n=1}^{\infty} |V_{n+1}|\exp(- |V_n|/n^{2 + 2\delta/3}) $ converges. 

\end{itemize}
\end{definition}

We remark that the condition that $B$ is exponentially bounded is very mild.

In Theorem \ref{general_case} below we show that Bratteli diagrams satisfying 
these conditions have infinitely many maximal paths. 
Given $v\in V_{n+1}$, define
\[
V_{n}^{v,i}:=\{ w \in V_n: f_{v,w}^{(n)}= i\}\, , 
\]
so that if the incidence matrix entries for $B$ are all positive and bounded
above by $r$, then $V_n = \bigcup_{i=1}^r V_n^{v,i} \mbox{ for each } v \in V_{n+1}$.

\begin{definition}    Let $B$ be a Bratteli diagram with positive incidence matrices. 
We say that $B$ is {\em impartial} if there exists an integer $r$ so that all of $B$'s  
incidence matrix entries are bounded above by $r$, and if there exists some 
$\alpha \in (0,1)$ such that for any $n$, any $i\in \{1, \ldots , r\} $  and any 
$v\in V_{n+1}$,  $|V_n^{v,i}|\geq \alpha |V_n|$.
\end{definition}

In other words, $B$ is impartial if for any row of any incidence matrix, 
no entry occurs disproportionately rarely or often with respect to the others. 
For example, fixing $r$,  if we let $|V_n|=r(n+1)$, and  let each row 
of $F_n$ consist of  any vector with entries equidistributed from 
$\{1,\ldots, r\}$, the resulting Bratteli diagram  is impartial.
Note that our diagrams in Theorem  \ref{thm:dichot}
are impartial. 
However the vertex sets can grow as fast 
as we want, so the diagrams are not necessarily exponentially bounded. 
We remark also that if a Bratteli diagram is impartial, then it is completely
connected, which means that we can apply Theorem 
\ref{random_order_imperfect} if $j>1$.

\begin{definition} Suppose that $B$ is a Bratteli diagram each of whose incidence
matrices has entries with a maximum value of  $r$.  We say that 
$A\subset V_n$ is {\em $(\beta,\epsilon)$-equitable} for $B$ if for each 
$v\in V_{n+1}$ and for each $i=1, \ldots , r$, 
\[  
\left| \frac{|V_n^{v,i}\cap A|}{|V_n^{v,i}|} - \beta   \right|\leq \epsilon.   
\]

In the case $\beta=\frac 12$, we shall speak simply of $\epsilon$-equitability. 
\end{definition}

Given $v\in V\backslash V_0$ and an order $\om\in \mathcal O_B$, recall that we use
$\wt e_v = \wt e_v(\om)$ to denote the maximal edge with range $v$.

\begin{lemma}\label{mean_roughly_beta}
Suppose that $B$ is impartial. Let $A\subset V_n$ be $(\beta,\epsilon)$-equitable, 
and $v\in V_{n+1}$. Let the random variable $X_v$ be defined as
\begin{equation*} X_v(\om)
   = \left\{
\begin{array}{rl}
1
& \mbox{if $ s(\wt e_v)\in A$,
   } \\
0 & \mbox{ otherwise.}
  \end{array}
  \right. \end{equation*} 
Then $\beta- \epsilon \leq \mathbb E (X_v) \leq \beta+ \epsilon $.
\end{lemma}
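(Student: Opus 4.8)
The plan is to compute $\mathbb E(X_v)$ directly from the definition of the maximal edge and the impartiality hypothesis. The key observation is that, given $v \in V_{n+1}$, the maximal edge $\wt e_v$ is determined by the random order $\om_v$ on $r^{-1}(v)$, which is uniform on the set of all orders. Since all incidence matrix entries are positive, every vertex $w \in V_n$ has at least one edge into $v$, and the number of such edges is $f^{(n)}_{v,w} \in \{1,\ldots,r\}$, i.e.\ $w \in V_n^{v,i}$ means there are exactly $i$ edges from $w$ to $v$. Because the order is uniform on $r^{-1}(v)$, each individual edge in $r^{-1}(v)$ is equally likely to be the maximal one; hence the probability that the maximal edge has source $w$ is $f^{(n)}_{v,w} / |r^{-1}(v)|$, where $|r^{-1}(v)| = \sum_{w \in V_n} f^{(n)}_{v,w} = \sum_{i=1}^r i\,|V_n^{v,i}|$.

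Therefore I would write
\begin{equation*}
\mathbb E(X_v) = \mathbb P(s(\wt e_v) \in A)
= \frac{\sum_{w \in A} f^{(n)}_{v,w}}{\sum_{w \in V_n} f^{(n)}_{v,w}}
= \frac{\sum_{i=1}^r i\,|V_n^{v,i} \cap A|}{\sum_{i=1}^r i\,|V_n^{v,i}|}.
\end{equation*}
By $(\beta,\epsilon)$-equitability, for each $i$ we have $(\beta-\epsilon)|V_n^{v,i}| \le |V_n^{v,i} \cap A| \le (\beta+\epsilon)|V_n^{v,i}|$. Substituting the upper bound in the numerator gives
\begin{equation*}
\mathbb E(X_v) \le \frac{\sum_{i=1}^r i\,(\beta+\epsilon)|V_n^{v,i}|}{\sum_{i=1}^r i\,|V_n^{v,i}|} = \beta+\epsilon,
\end{equation*}
and the lower bound is entirely symmetric, yielding $\mathbb E(X_v) \ge \beta - \epsilon$. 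This gives the claimed inequalities.

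The main point requiring care — really the only non-routine step — is the justification that $\mathbb P(s(\wt e_v) = w) = f^{(n)}_{v,w}/|r^{-1}(v)|$; this is an instance of the general fact that if a uniformly random linear order is placed on a finite set, every element is equally likely to be the maximum, so a block of $f^{(n)}_{v,w}$ edges captures a proportion $f^{(n)}_{v,w}/|r^{-1}(v)|$ of the probability. Note that impartiality itself (the bound $|V_n^{v,i}| \ge \alpha|V_n|$) is not actually needed for this lemma — only that the incidence matrix entries are positive and bounded by $r$, so that the decomposition $V_n = \bigcup_{i=1}^r V_n^{v,i}$ is valid and $A$ intersects each piece; equitability does the rest. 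I would remark that the ratio-of-weighted-sums estimate above is the elementary inequality $\frac{\sum a_i x_i}{\sum a_i} \le \max_i x_i$ applied with $a_i = i|V_n^{v,i}|$ and $x_i = |V_n^{v,i}\cap A|/|V_n^{v,i}| \in [\beta-\epsilon,\beta+\epsilon]$.
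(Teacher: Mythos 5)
Your proof is correct and follows essentially the same route as the paper: the paper writes down the same identity $\mathbb E(X_v)=\sum_{j=1}^{r} j|A\cap V_n^{v,j}|\big/\sum_{j=1}^r j|V_n^{v,j}|$ and bounds each numerator term via $(\beta,\epsilon)$-equitability, exactly as you do. Your explicit justification that each edge of $r^{-1}(v)$ is equally likely to be maximal under the uniform order is a welcome elaboration of a step the paper leaves implicit, and your observation that only positivity and the bound $r$ are used (impartiality merely guarantees the sets $V_n^{v,i}$ are nonempty so the equitability ratios make sense) is accurate.
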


\begin{proof}
We have
\begin{align*}
\mathbb E (X_v) &= \frac {\sum_{j=1}^{r} j|A\cap V_n^{v,j}|}{\sum_{j=1}^r j|V_n^{v,j}|}\\
&\leq   
\frac{\sum_{j=1}^{r} j| V_n^{v,j}|(\beta+\epsilon)}
{\sum_{j=1}^r j|V_n^{v,j}|}  =   
\beta+\epsilon,
\end{align*}
the last inequality following since $A$ is $\epsilon$-equitable. Similarly, 
$\mathbb E (X_v) \geq \beta-\epsilon$.
\end{proof}

\begin{lemma}\label{lem:Hoeffding-app}
Let $B$ be an impartial Bratteli diagram with impartiality constant $\alpha$
and the property that each entry of each incidence matrix is between 1 and $r$. 
Let $\beta$, $\delta$ and $\epsilon$ be positive,  
let $(p_v)_{v\in V_N}$ satisfy $|p_v-\beta|<\delta$ for each $v\in V_N$
and let $A\subset V_N$ be a randomly chosen subset, where each $v$
is included with probability $p_v$ independently of the inclusion
of all other vertices. Then the probability that $A$ fails to be
$(\beta,\delta+\epsilon)$-equitable is at most $2r|V_{N+1}|e^{-\alpha|V_N|\epsilon^2}$.
\end{lemma}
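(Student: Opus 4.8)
The plan is to prove Lemma~\ref{lem:Hoeffding-app} by a direct application of Hoeffding's inequality to each of the relevant counting statistics, followed by a union bound over all the events that could cause $A$ to fail $(\beta,\delta+\epsilon)$-equitability. Recall that $A$ is $(\beta,\delta+\epsilon)$-equitable precisely when, for every $v\in V_{N+1}$ and every $i\in\{1,\dots,r\}$, the empirical density $|V_N^{v,i}\cap A|/|V_N^{v,i}|$ lies within $\delta+\epsilon$ of $\beta$. So the failure event is the union, over the at most $r|V_{N+1}|$ pairs $(v,i)$, of the events
\[
B_{v,i}=\left\{\left|\frac{|V_N^{v,i}\cap A|}{|V_N^{v,i}|}-\beta\right|>\delta+\epsilon\right\}.
\]
It therefore suffices to bound $\PP(B_{v,i})$ by $2e^{-\alpha|V_N|\epsilon^2}$ for each fixed pair and then sum.

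Next I would fix a pair $(v,i)$ and analyse $|V_N^{v,i}\cap A|=\sum_{w\in V_N^{v,i}}\mathbf 1_{\{w\in A\}}$, a sum of independent Bernoulli random variables with success probabilities $p_w$. Its mean is $\sum_{w\in V_N^{v,i}}p_w$, and since each $|p_w-\beta|<\delta$, the normalized mean $\frac1{|V_N^{v,i}|}\sum_{w\in V_N^{v,i}}p_w$ lies within $\delta$ of $\beta$. Thus if the empirical average deviates from $\beta$ by more than $\delta+\epsilon$, it must deviate from its own mean by more than $\epsilon$; that is, $B_{v,i}$ is contained in the event that $\big|\frac{1}{|V_N^{v,i}|}\sum_{w\in V_N^{v,i}}(\mathbf 1_{\{w\in A\}}-p_w)\big|>\epsilon$. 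Hoeffding's inequality for bounded independent variables (each summand lies in $[0,1]$) gives that this has probability at most $2\exp(-2|V_N^{v,i}|\epsilon^2)$. Finally, impartiality gives $|V_N^{v,i}|\ge\alpha|V_N|$, so this is at most $2\exp(-2\alpha|V_N|\epsilon^2)\le 2\exp(-\alpha|V_N|\epsilon^2)$, which is more than enough.

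Summing over the at most $r|V_{N+1}|$ pairs $(v,i)$ via the union bound then yields
\[
\PP(A\text{ not }(\beta,\delta+\epsilon)\text{-equitable})\le \sum_{v\in V_{N+1}}\sum_{i=1}^r\PP(B_{v,i})\le 2r|V_{N+1}|e^{-\alpha|V_N|\epsilon^2},
\]
as claimed. There is no real obstacle here: the only thing to be careful about is the bookkeeping that reduces a deviation of the empirical density from the \emph{target} $\beta$ to a deviation from the \emph{actual} mean $\frac1{|V_N^{v,i}|}\sum p_w$, using the triangle inequality and the hypothesis $|p_v-\beta|<\delta$; and then invoking $|V_N^{v,i}|\ge\alpha|V_N|$ at the end to convert the exponent into the stated form. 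One should also note that the inclusions of distinct vertices are independent by hypothesis, so restricting the index set to $V_N^{v,i}$ preserves independence and Hoeffding applies directly.
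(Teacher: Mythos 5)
Your proof is correct and follows essentially the same route as the paper's: apply Hoeffding's inequality to each empirical density $|V_N^{v,i}\cap A|/|V_N^{v,i}|$ (after shifting the deviation from $\beta$ to a deviation of size $\epsilon$ from the true mean, which lies within $\delta$ of $\beta$), use impartiality to bound $|V_N^{v,i}|\ge\alpha|V_N|$, and finish with a union bound over the at most $r|V_{N+1}|$ pairs $(v,i)$. Like the paper, you in fact obtain the stronger exponent $2\alpha|V_N|\epsilon^2$ before relaxing to the stated bound, so there is nothing to correct.
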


\begin{proof}
Let $(Z_v)_{v\in V_N}$ be $\mathbf 1_{v\in A}$, so that these are
independent Bernoulli random variables, where $Z_v$ takes the value 1 with probability $p_v$

For $u\in V_{N+1}$ and $1\leq i \leq r$, define
\begin{equation}\label{definition_Yw,i}
Y_{u,i} := \frac{1}{|V_{N}^{u,i}|}\sum_{v\in V_{N}^{u,i} } Z_{v}  
=\frac{ |  \{ v\in V_{N}^{u,i}: v\in A    \}|        }{|V_{N}^{u,i}|}
=\frac{  |A\cap V_{N}^{u,i}|      }{|V_{N}^{u,i}|} \, .  
\end{equation}
 
Using Hoeffding's inequality \cite{hoeffding}, since $\beta-\delta\leq \mathbb E (Y_{u,i} )
\leq \beta+\delta$  we have that
\begin{align*}
\mathbb P (\{|Y_{u,i}- \beta|\geq (\delta + \epsilon)  \})& \leq 
\mathbb P (\{   |  Y_{u,i}-   \mathbb E (Y_{u,i}) |\geq \epsilon   \}) \\ 
& \leq 
 2 e^{-2|V_{N}^{u,i}|\epsilon^2} \leq 2 e^{-2\alpha|V_{N}|\epsilon^2}.
\end{align*}
This implies that
\begin{equation}\label{distributed_a}
\mathbb P \left(
\bigcup_{i=1}^{r}\bigcup_{u\in V_{N+1}}\{|Y_{u,i}- \beta|\geq \delta+\epsilon    \}
\right)   \leq  2r|V_{N+1}| e^{-2|V_{N}|\alpha \epsilon^2}.
\end{equation}

\end{proof}

\begin{lemma}\label{distributed}
Suppose that $B$ is impartial, superquadratic and exponentially bounded.
Then for any $\epsilon$ small there exist $n$ and $A\subset V_n$ 
such that $A$ is $(\frac12,\epsilon)$-equitable.
\end{lemma}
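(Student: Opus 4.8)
The plan is to produce an $(\tfrac12,\epsilon)$-equitable set by a random construction: start with a single allele at a suitably deep level $\ell$ and evolve it forward, using the fact that such a starting allele can be chosen to occupy roughly half the population at level $\ell$, and then showing that along the way it stays equitable with high probability. More concretely, first I would choose $\ell$ large enough that two things hold simultaneously: (i) $B$ is superquadratic from level $\ell$ on, so $M_n \ge n^{2+\delta}$; and (ii) the exponential-boundedness tail $\sum_{n \ge \ell} |V_{n+1}| \exp(-|V_n|/n^{2+2\delta/3})$ is smaller than, say, $\epsilon/100$. The point of (ii) is that it is exactly the sum of the error bounds coming from Lemma \ref{lem:Hoeffding-app} when the parameter $\epsilon$ in that lemma is taken to be roughly $n^{-(1+\delta/3)}$, since then $\alpha |V_n|\epsilon^2 \gtrsim \alpha |V_n|/n^{2+2\delta/3}$.

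Next I would set up the iteration. Fix a level $\ell$ as above and a vertex set $A_\ell \subset V_\ell$ that is $(\tfrac12, 0)$-equitable up to rounding — this is possible because impartiality says each $V_\ell^{v,i}$ has size at least $\alpha|V_\ell|$, but in fact one wants $A_\ell$ to meet each $V_\ell^{v,i}$ in close to half its elements simultaneously over all $v \in V_{\ell+1}$; the cleanest way is to let $A_\ell$ itself be random, including each vertex independently with probability $\tfrac12$, and invoke Lemma \ref{lem:Hoeffding-app} (with $\beta=\tfrac12$, $\delta=0$, $p_v\equiv\tfrac12$) to see that $A_\ell$ is $(\tfrac12,\epsilon_\ell)$-equitable off an event of probability $2r|V_{\ell+1}|e^{-\alpha|V_\ell|\epsilon_\ell^2}$. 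Then I would propagate: given $A_n \subset V_n$ that is $(\tfrac12,\epsilon_n)$-equitable, push it forward to $A_{n+1} = \{v \in V_{n+1}: s(\wt e_v) \in A_n\}$, the set of level-$(n+1)$ vertices whose maximal incoming edge has source in $A_n$. By Lemma \ref{mean_roughly_beta}, conditional on $A_n$ each $v \in V_{n+1}$ lands in $A_{n+1}$ with probability $p_v$ satisfying $|p_v - \tfrac12| \le \epsilon_n$, and these events are independent across $v$ (distinct vertices choose independent orders). So Lemma \ref{lem:Hoeffding-app}, applied with $\beta=\tfrac12$, $\delta=\epsilon_n$ and a fresh accuracy parameter $\epsilon'_{n}$, tells us that $A_{n+1}$ fails to be $(\tfrac12, \epsilon_n + \epsilon'_n)$-equitable with probability at most $2r|V_{n+2}|e^{-\alpha|V_{n+1}|(\epsilon'_n)^2}$.

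The bookkeeping is then to choose the accuracy increments $\epsilon'_n$ so that (a) they are summable, giving a final equitability constant $\epsilon_\infty := \epsilon_\ell + \sum_{n \ge \ell}\epsilon'_n \le \epsilon$, and (b) the failure probabilities $2r|V_{n+2}|e^{-\alpha|V_{n+1}|(\epsilon'_n)^2}$ are summable with total $< 1$, so that by the union bound there is a positive-probability event — indeed with probability close to $1$ if $\ell$ is deep enough — on which $A_n$ is $(\tfrac12,\epsilon)$-equitable for every $n \ge \ell$; in particular $A_n$ is nonempty and $(\tfrac12,\epsilon)$-equitable for some (every) $n$, which is the assertion. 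The natural choice is $\epsilon'_n \asymp n^{-(1+\delta/3)}$: this is summable (since $1+\delta/3 > 1$), and then $\alpha|V_{n+1}|(\epsilon'_n)^2 \gtrsim \alpha|V_{n+1}|/(n+1)^{2+2\delta/3}$, so the failure-probability series is (a constant times) the exponential-boundedness series from Definition \ref{equal_path_number}, which converges and, after discarding the first $\ell$ terms, has sum below any prescribed threshold.

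The main obstacle I anticipate is not any single estimate but making the constants interlock cleanly: the superquadratic exponent $\delta$ must be large enough that $n^{-(1+\delta/3)}$ is summable while $|V_n|\cdot n^{-(2+2\delta/3)}$ still matches the exponent $|V_n|/n^{2+2\delta/3}$ appearing verbatim in the exponential-boundedness hypothesis — this is precisely why the definition uses the exponent $2+2\delta/3$ rather than $2+\delta$ or $2+\delta/2$, leaving the slack $\delta/3$ for the summable accuracy increments. A secondary point requiring care is the very first step: one must verify that a random $\tfrac12$-density subset of $V_\ell$ is equitable with high probability, which is just Lemma \ref{lem:Hoeffding-app} again but with $\delta=0$, and that the deterministic "first half / second half" rounding never costs more than $O(1/|V_n^{v,i}|) = O(1/(\alpha|V_n|))$, which is absorbed into $\epsilon'_n$. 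Once these are pinned down, the proof is a union bound over a convergent series.
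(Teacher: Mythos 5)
Your first step---take $A\subset V_n$ random, including each vertex independently with probability $\tfrac12$, apply Lemma \ref{lem:Hoeffding-app} with $\beta=\tfrac12$, $p_v\equiv\tfrac12$, and note that the failure bound $2r|V_{n+1}|e^{-\alpha|V_n|\epsilon^2}$ drops below $1$ for large $n$, since $\epsilon$ is fixed, so eventually $\alpha\epsilon^2\ge n^{-(2+2\delta/3)}$, while exponential boundedness forces $|V_{n+1}|e^{-|V_n|/n^{2+2\delta/3}}\to 0$---is exactly the paper's entire proof, and the lemma is already established at that point. Everything after that (the forward propagation $A_{n+1}=\{v\colon s(\wt e_v)\in A_n\}$ via Lemma \ref{mean_roughly_beta}, the summable accuracy increments $\epsilon'_n\asymp n^{-(1+\delta/3)}$, the union bound over all levels) is not needed here: the lemma asks only for one level $n$ and one equitable set, and your iteration is in substance the proof of Theorem \ref{general_case}, which the paper keeps separate and for which this lemma merely supplies the starting set. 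Two minor points of hygiene: Lemma \ref{lem:Hoeffding-app} is stated with the strict inequality $|p_v-\beta|<\delta$, so with $p_v\equiv\tfrac12$ you should either take $\delta$ arbitrarily small or note that Hoeffding's inequality directly yields $(\tfrac12,\epsilon)$-equitability; and the remark about ``first half / second half'' rounding is vestigial---no splitting occurs in your construction (that device belongs to the proof of Theorem \ref{thm:dichot}), so there is nothing to absorb into $\epsilon'_n$.
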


\begin{proof}
Let $r$ and $\alpha$ be as in the statement of Lemma \ref{lem:Hoeffding-app} and 
apply that lemma with $p_v=\frac 12$ for each $v\in V_n$. By the superquadratic
and exponentially bounded properties, one has
$2r|V_{n+1}|e^{-2\alpha|V_n|\epsilon^2}<1$ for large $n$. 
Since the probability that a randomly chosen 
set is $(\frac12,\epsilon)$-equitable is positive, the existence of such a set 
is guaranteed.
\end{proof}

\begin{theorem}\label{general_case}
Suppose that  $B$ is a  Bratteli diagram that is  impartial, superquadratic and exponentially 
bounded. Then $\mathbb P$-almost all orders on $B$ have infinitely many maximal paths.
\end{theorem}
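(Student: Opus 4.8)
The plan is to mimic the proof of the second part of Theorem \ref{thm:dichot}, replacing the role of Proposition \ref{thm:mgslowdown} by the equitability machinery built up in Lemmas \ref{mean_roughly_beta}--\ref{distributed}. The idea is again an \emph{evolve-and-split} construction: we find a sequence of levels $(n_k)$ and, for each finite binary string $s$, an $\mathcal F_{n_k}$-measurable random subset $A_s(\om)\subset V_{n_k}$ (where $|s|=k$), such that the family $\{A_s(\om):|s|=k\}$ partitions $V_{n_k}$ into $2^k$ pieces each of which is close to a $2^{-k}$ fraction of $V_{n_k}$, and such that $A_{s0}$ and $A_{s1}$ are obtained by splitting the evolved image of $A_s$ roughly in half. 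As in Theorem \ref{thm:dichot}, once all the $A_s(\om)$ are non-empty, the induced map $\iota\colon X_{\max}(\om)\to\{0,1\}^{\N}$ is continuous and surjective, giving uncountably (hence infinitely) many maximal paths; and a measure-$1$ statement follows by letting the failure probability tend to $0$ and using completeness of $\mathbb P$.

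First I would set up the key propagation step: given an $\mathcal F_n$-measurable set $A\subset V_n$ that is $(\beta,\epsilon)$-equitable, consider its ``maximal-path image'' $S^\om_{n,n'}(A)\subset V_{n'}$, i.e. the set of $w\in V_{n'}$ whose maximal incoming path passes through $A$. By Lemma \ref{mean_roughly_beta}, for each $v\in V_{n+1}$ the indicator $X_v$ of $\{s(\wt e_v)\in A\}$ has mean within $\epsilon$ of $\beta$; and conditionally on $\mathcal F_n$ these indicators are independent over $v\in V_{n+1}$ (each $v$ draws its order of $r^{-1}(v)$ independently). Feeding the resulting set $\{v:X_v=1\}=S^\om_{n,n+1}(A)$ into Lemma \ref{lem:Hoeffding-app} with $p_v$ within $\epsilon$ of $\beta$, we get that with probability at least $1-2r|V_{n+2}|e^{-\alpha|V_{n+1}|\epsilon^2}$ the set $S^\om_{n,n+1}(A)$ is $(\beta,2\epsilon)$-equitable (as a subset of $V_{n+1}$). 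Iterating this one level at a time from $n$ to $n'$, and choosing the tolerance budget so that the accumulated error stays bounded (one can afford a summable-over-levels error because of the exponential-boundedness hypothesis), we obtain: for suitable $n<n'$, with probability close to $1$, $S^\om_{n,n'}(A)$ is $(\beta,\epsilon')$-equitable with $\epsilon'$ only slightly larger than $\epsilon$. Splitting a $(\beta,\epsilon)$-equitable set exactly in half (with respect to the fixed indexing of the level) yields two $(\beta/2, \epsilon/2 + O(1/|V_n^{v,i}|))$-equitable sets, and the $O(1/|V_n^{v,i}|)$ term is negligible because $|V_n^{v,i}|\ge \alpha|V_n|\to\infty$.

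With these two operations in hand the construction runs exactly as in Theorem \ref{thm:dichot}: start at some level $n_0$ with a $(\frac12\cdot 2^{0},\epsilon_0)$-equitable set $A_\emptyset = V_{n_0}$ provided by Lemma \ref{distributed} (or simply all of $V_{n_0}$, which is trivially equitable), and at stage $k$ evolve each $A_s(\om)$ ($|s|=k$) from $n_k$ to a much larger level $n_{k+1}$, chosen large enough that the evolution failure probability is at most $\kappa\, 4^{-k}$ (possible by exponential-boundedness and superquadraticity, since $|V_{n+1}|e^{-\alpha|V_n|\epsilon^2}$ is summable), then split. A union bound over the at most $2^k$ strings at level $k$ and over all $k$ shows that with probability at least $1-\kappa\sum_k 2^k 4^{-k}\cdot C = 1-\kappa'$ the entire tree of sets is well-behaved, in particular all $A_s(\om)$ are non-empty; on that event $\iota$ is surjective. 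Let $\kappa'\to 0$ and invoke completeness of $\mathbb P$ to conclude that almost every $\om$ has uncountably many, hence infinitely many, maximal paths.

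The main obstacle I anticipate is bookkeeping the tolerance parameters across the many one-level Hoeffding applications so that (i) equitability does not degrade below a usable threshold over a long stretch of levels, and (ii) the total failure probability, summed over all levels used and all $2^k$ branches, remains controllable. This is precisely where the \emph{exponentially bounded} hypothesis is essential: the bound $2r|V_{N+1}|e^{-2\alpha|V_N|\epsilon^2}$ from Lemma \ref{lem:Hoeffding-app} must be summable over the levels traversed, and one must verify that the superquadratic lower bound $M_n\ge n^{2+\delta}$ together with the definition of exponential boundedness (involving the exponent $n^{2+2\delta/3}$) leaves enough room to take $\epsilon$ a fixed small constant while still getting convergence. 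A secondary technical point is confirming that the "first half / second half'' splitting with respect to a fixed indexing of $V_{n_k}$ interacts correctly with the per-vertex sets $V_{n_k}^{v,i}$ — but since impartiality forces each $|V_{n_k}^{v,i}|$ to be a definite fraction of $|V_{n_k}|$, the discretization error in halving is $O(1/|V_{n_k}|)$ per class and washes out.
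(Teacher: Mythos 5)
Your overall architecture (propagate equitability level by level via Lemma \ref{mean_roughly_beta} plus Lemma \ref{lem:Hoeffding-app}, with a summable tolerance budget and summable failure probabilities supplied by superquadraticity and exponential boundedness) is sound, and it is in fact what the paper does. But your evolve-and-split step contains a genuine gap: splitting an $\mathcal F_{n_k}$-measurable set $A_s(\om)$ into ``first half / second half'' with respect to a fixed indexing of $V_{n_k}$ does \emph{not} produce $(\beta/2,\cdot)$-equitable pieces. Equitability is a per-class condition on every $V_{n}^{v,i}$, and the fixed indexing can be arbitrarily correlated with these classes: for instance, if for some $v$ the class $V_n^{v,1}$ consists of the lowest-indexed vertices, the first half of $A_s$ may be essentially $A_s\cap V_n^{v,1}$, so one child has class-density $\approx\beta$ on $V_n^{v,1}$ and $\approx 0$ on $V_n^{v,2}$. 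This is not a discretization error of size $O(1/|V_n|)$, as you claim; impartiality ($|V_n^{v,i}|\ge\alpha|V_n|$) does nothing to prevent it. The failure matters, because without equitability of $A_{s0},A_{s1}$ you cannot apply Lemma \ref{mean_roughly_beta} at the next level: the probability that $s(\wt e_v)\in A$ is the edge-weighted density $\sum_j j|A\cap V_n^{v,j}|/\sum_j j|V_n^{v,j}|$, which is not controlled by $|A|/|V_n|$ alone, so the whole propagation for the children breaks down. (This is exactly why the all-ones case of Theorem \ref{thm:dichot}, where there is a single class, tolerates the naive halving, but the impartial case does not.) A repair is possible --- e.g.\ bisect $S^\om_{n_k,n_{k+1}}(A_s)$ by an auxiliary equitable split whose existence is shown by a probabilistic-method/Hoeffding argument in the spirit of Lemmas \ref{lem:Hoeffding-app} and \ref{distributed}, chosen measurably --- but that extra lemma is missing from your write-up, and its absence is the crux.

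For comparison, the paper avoids the splitting problem altogether: it evolves a \emph{single} $\epsilon_N$-equitable set $A_N$ of density about $\tfrac12$, shows that with positive (not near-one) probability every $A_{N+k}$ stays $\delta_{N+k}$-equitable with $\sum\epsilon_j<\tfrac12$, so both $A_n$ and its complement persist and there are at least two maximal paths; it then repeats with $k$ equitable pieces to get at least $k$ maximal paths, the almost-sure upgrade coming from the constancy of $j$ in Theorem \ref{generic_theorem} rather than from a $1-\kappa$ construction. The binary-tree construction you propose is precisely the strengthening (uncountably many maximal paths) that the paper only remarks ``could be generalized''; carried out correctly it would prove more, but as written the halving step does not go through.
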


We note that in the special case where $B$ is defined as in Section 
\ref{special_case}, the following proof can be simplified and does not require the 
condition that $B$ is exponentially bounded. Instead of beginning our 
procedure with an equitable set, which is what we do below, we can start 
with any set $A_N\subset V_N$  whose size relative to $V_N$ is around 1/2.

\begin{proof}
Since $B$ is superquadratic, we find  a sequence $(\epsilon_j)$ 
such that 
\begin{eqnarray}\label{assumptions_on_epsilon}
&&\sum_{j=1}^{\infty}\epsilon_j <\infty  \text{ and } \label{epsilon1} \\
\label{assumptions_on_epsilon_2}
&&M_j\epsilon_{j}^{2} \geq j^{\gamma} \text{ for some }\gamma >0\text{ and large enough }j.
\label{epsilon2}  
\end{eqnarray}

Fix $N$ so that (\ref{epsilon2}) holds for all $j\geq N$, and let $N$ be large 
enough so that  $ \sum_{j=N}^{\infty}\epsilon_j<\frac{1}{2} $. Moreover, we can 
also choose our sequence $(\epsilon_j)$ and our $N$ large enough so that there exists
a set $A_N\subset V_N$ which is $\epsilon_N$-equitable: by Lemma \ref{distributed}, 
this can be done. For all $k\geq 0$, define also 
\[
\delta_{N+k}=\sum_{i=0}^{k} \epsilon_{N+i}.
\]
Finally, let $r$ be so that all entries of all $F_n$ are bounded above by $r$.

Define recursively, for all integers $k>0$ and all $v\in V_{N+k}$, the Bernoulli random 
variables $\{X_v: \mathcal O_B \rightarrow \{0,1\}: v\in V_{N+k} \}$,   
and the random sets $\{ A_{N+k}: \mathcal O_B \rightarrow 2^{V_{N+k}}: k\geq 1\}$, where  
$X_v(\om)=1$  if 
$s(\wt e_v) \in A_{N+k-1}$,  and 0 otherwise, and  $A_{N+k}=\{ v\in V_{N+k}\,:\, X_v=1\}$.

We shall show that for a large set of $\om$, each set $A_{N+k}$ is $\delta_{N+k}$-equitable. 
This implies that the size of $A_{N+k}$ is not far from $\frac12 |V_{N+k}|$.
For, if  $k\geq 1$, define the event 
\[
D_{N+k} := \{ \om : A_{N+k}\text{ is }\delta_{N+k}-\text{equitable} \}.
\]

We claim that 
$$
\mathbb P(D_{N+k+1}|D_{N+k})\ge 1-2r|V_{N+k+2}|e^{-2\alpha|V_{N+k+1}|\epsilon_{N+k+1}^2}.
$$
To see this, notice that if $\omega\in D_{N+k}$, 
then by Lemma \ref{mean_roughly_beta}, given $\mathcal F_{N+k}$, each 
vertex in $V_{N+k+1}$ is independently present in $A_{N+k+1}$ with probability in the 
range $[\beta-\delta_{N+k},\beta+\delta_{N+k}]$. Hence by Lemma
\ref{lem:Hoeffding-app}, $A_{N+k+1}$ is $\delta_{N+k+1}$-equitable with probability
at least $1-2r|V_{N+k+2}|e^{-\alpha|V_{N+k+1}|\epsilon_{N+k+2}^2}$.

Next we show that our work implies that a random order has at least two 
maximal paths. Let   $\gamma =\frac12- \sum_{j=N}^{\infty}\epsilon_j$. Notice 
that if $A_n\ne V_n$ for all $n>N$, then there are at least two maximal paths.
By our choice of $N$ and $\gamma >0$ we have that 
\begin{align*}
 \mathbb P(\{\om: |X_\text{max}(\om)|\geq 2\}) 
 &\geq   \mathbb P\left(\bigcap_{k=1}^{\infty}\left\{\om:  \gamma \leq 
 \frac{|A_{N+k}|}{|V_{N+k}|}\leq 1 - \gamma \,\, \right\}\right)\\
 &\geq   \mathbb P\left(\bigcap_{k=1}^{\infty}D_{N+k}\right)\\
 & =  \lim_{n\rightarrow \infty}   \mathbb P(D_{N+1})
   \prod_{k=1}^n\mathbb P(D_{N+k+1}\vert D_{N+k})\\
 & \geq    \lim_{n\rightarrow \infty} \mathbb P(D_{N+1}) 
  \prod_{k=1}^n   (1- 2r |V_{N+k+2}|e^{-2|V_{N+k+1}|\alpha\epsilon_{N+k+1}^2  }),
\end{align*}
and the condition that $B$ is superquadratic and exponentially bounded
ensures that this last term converges to a non-zero value.

We can repeat this argument to show that for any natural $k$, 
a random order has at least $k$ maximal paths.  We remark also that the 
techniques of Section \ref{special_case} could be generalized to show 
that a random order would have uncountably many maximal paths.
 
We now apply Theorem \ref{random_order_imperfect}. 
\end{proof}
\proof[Acknowledgements]
We thank Richard Nowakowski and Bing Zhou for helpful discussions around Lemma \ref{lem:Bing}.

{\footnotesize
\bibliographystyle{alpha}
\bibliography{bibliography}
}

\end{document}